\theoremstyle{plain}
\newtheorem{theorem}{Theorem}[section]
\newtheorem{lem}[theorem]{Lemma}
\newtheorem{cor}[theorem]{Corollary}
\newtheorem{prop}[theorem]{Proposition}
\newtheorem{remark}[theorem]{Remark}
\theoremstyle{definition}
\newtheorem{defn}[theorem]{Definition}
\theoremstyle{remark}
\newtheorem{question}{Question}
\newcommand{\Q}{\mathbb{Q}}
\newcommand{\Z}{\mathbb{Z}}
\newcommand{\G}{\mathcal{G}}
\providecommand{\customgenericname}{}
\newcommand{\newcustomtheorem}[2]{%
	\newenvironment{#1}[1]
	{%
		\renewcommand\customgenericname{#2}%
		\renewcommand\theinnercustomgeneric{##1}%
		\innercustomgeneric
	}
	{\endinnercustomgeneric}
}
\renewcommand{\leq}{\leqslant}
\renewcommand{\geq}{\geqslant}
\renewcommand{\setminus}{\smallsetminus}
\title[Profinite genus]{Profinite genus of fundamental groups of compact flat manifolds with holonomy group of prime order}
\keywords{Profinite genus; Bieberbach group; compact flat manifold.}
\author[G. J. Nery]{\bfseries Genildo de Jesus Nery} 
\address{ 
Department of Mathematics \\ 
University of Brasilia   \\ 
Brasilia\\
Brazil}
\email{g.j.nery@mat.unb.br}
\thanks{The author held CNPq scholarship during the preparation of this article.} 
\begin{document}

{\begin{flushleft}\baselineskip9pt\scriptsize
\end{flushleft}}
\vspace{18mm} \setcounter{page}{1} \thispagestyle{empty}

\begin{abstract}
In this article we calculate the profinite genus of the fundamental group  of a $n$-dimensional compact flat manifold $X$ with holonomy group of prime order. As consequence, we prove that if $n\leq 21$, then $X$ is determined among all $n$-dimensional compact flat manifolds by the profinite completion of its fundamental group. Furthermore, we characterize the isomorphism class of profinite completion of the fundamental group of $X$ in terms of the representation genus of its holonomy group.
\end{abstract}

\maketitle

\section{Introduction}  

		There has been much recent study of whether residually finite groups, or classes of residually finite groups connected with geometry or topology may be distinguished from each other by their sets of finite quotient groups. 
		
		In group theory the study in this direction started in 70-th of the last centure when Baumslag [Bau74],  Stebe [Ste72] and others found examples of finitely generated residually finite groups having the same set of finite quotients. The general question  addressed in this study can be formulated as follows: 
		
		\begin{question}\label{question}
			To what extent a finitely generated residually finite group $\Gamma$ is  determined by its profinite completion?
		\end{question} 
		
		The study leaded to the notion of genus $\mathfrak{g}(\Gamma)$, the set of isomorphism classes of finitely generated residually finite groups having the same set of finite quotients as $\Gamma$. Equivalently, $\mathfrak{g}(\Gamma)$ is the set of isomorphism classes of finitely generated residually finite groups having the same profinite completion isomorphic to the profinite completion $\widehat \Gamma$ of $\Gamma$. In fact, the term genus was  borrowed from integral representation theory, where for a finite group $G$  the genus of a $\Z G$-lattice $M$ is defined as the set of isomorphism classes of $\Z G$-lattices $N$ such that the $\widehat \Z G$-modules $\widehat M$ and $\widehat N$ are isomorphic.    
		
		The study usually was concentrated to establish whether the cardinality $|\mathfrak{g}(\Gamma)|$ of   genus  is finite or  1 ( see \cite{GPS} or \cite{GS} for example). 
		
		The recent stream of research was concentrated on important properties of manifolds that can be detected by the profinite completion (see \cite{WZ-10, WZ-16, Wil19}) or the \textquoteleft rigidity\textquoteright \ results showing that the fundamental group of certain manifolds has genus 1, i.e., determined by their profinite completions (see \cite{BCR16, BMRS18, BMRS20, Wil17}).  For the recent account of many results of these studies we refer the reader to \cite{Reid18}. There are only few papers where exact numbers or estimates of the genus appear due to the difficulties of such calculation (see \cite{GZ11, BZ, Ner19}). 
		
		\bigskip
		We concentrate here on  $n$-dimensional compact flat manifolds. It is well known that these manifolds are well described by the famous Bieberbach’s theorems (see \cite{Cha86}). Consequently, the fundamental group $\pi_{1}(X)$ of $X$ is called Bieberbach group. Such groups are characterized as follows: a $n$-dimensional Bieberbach group  $\pi_{1}(X)$ is a torsion-free group having  a finitely generated maximal abelian torsion-free subgroup $M$ of $\pi_{1}(X)$ of finite index (see \cite{AK57}). The groups $M$ and $G=\pi_1(X)/M$ are known as the translation subgroup  and the holonomy group of $\pi_{1}(X)$ (or of $X$), respectively. A well known Auslander-Kuranishi theorem \cite{AK57} says that every finite group is the holonomy group of a compact flat manifold. Furthermore, a compact flat manifold is uniquely determined by its fundamental group, up to diffeomorphism.
		
		It  was proved recently in \cite{PPW} that all  Bieberbach groups of dimension $\leq 4$ are determined by their profinite completions.
		The main result of this  paper is the explicit  formula  for the genus of a $n$-dimensional Bieberbach group with holonomy group of prime order. Denote by $\Q(\zeta)$ the field of rational numbers with the $p$-th root of unity attached, by   $H(\Q(\zeta))$ its class group and by $\G(p)=G(\Q(\zeta)/\Q)$ its Galois group, the latter acts naturally on $H(\Q(\zeta))$.
		
		\begin{theorem}\label{main Theorem}
			Let  $\Gamma$ be a $n$-dimensional Bieberbach group whose holonomy group has prime order. 

\begin{enumerate}

\item[(i)]			If all indecomposable summands of $\Z G$-module $M$ have $\Z$-rank $p-1$ except one trivial summand of $\Z$-rank $1$,  then $|\mathfrak{g}(\Gamma)|=|C_2\backslash H(\Q(\zeta))|$, where $C_2$ is the group of order $2$ acting on $H(\Q(\zeta))$ by invertion.

\item[(ii)] Otherwise $|\mathfrak{g}(\Gamma)|=|\mathcal{G}(p)\backslash H(\Q(\zeta))|$.
\end{enumerate} 	
		\end{theorem}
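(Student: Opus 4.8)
The plan is to reduce the computation of $\mathfrak{g}(\Gamma)$ to a problem about $\Z G$-lattices and group cohomology, and then to solve that problem using the Diederichsen--Reiner classification of $\Z C_p$-lattices together with a careful analysis of which integral automorphisms are available. Write $G=\Gamma/M\cong C_p$. First I would record the structural facts that drive the reduction: the translation subgroup $M$ is the Fitting subgroup of $\Gamma$, hence characteristic and recognizable from $\widehat{\Gamma}$; since $G$ is finite and $M$ is finitely generated, the sequence $1\to\widehat{M}\to\widehat{\Gamma}\to G\to1$ is exact with $\widehat{M}=M\otimes\widehat{\Z}$ a $\widehat{\Z}G$-module. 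Consequently any $\Lambda$ with $\widehat{\Lambda}\cong\widehat{\Gamma}$ is again a Bieberbach group with holonomy $C_p$ whose translation lattice $N$ lies in the same genus as $M$, i.e. $\widehat{N}\cong\widehat{M}$ as $\widehat{\Z}C_p$-modules. Because $C_p$ is finite, $H^2(C_p,-)$ is a finite $p$-group insensitive to completion, so the torsion-free extension class, which I will see is a \emph{special} element of $H^2(C_p,M)$, is carried along faithfully. This identifies $\mathfrak{g}(\Gamma)$ with the set of pairs $(N,\beta)$, $N$ in the genus of $M$ and $\beta$ a special class, modulo the action of $\mathrm{Aut}(C_p)\ltimes(\text{semilinear lattice isomorphisms})$.

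Next I would describe the two sides of this quotient. By Diederichsen--Reiner the indecomposable $\Z C_p$-lattices are the trivial lattice $\Z$, the fractional ideals $\mathfrak{a}$ of $\Z[\zeta]$ (of $\Z$-rank $p-1$), and the rank-$p$ lattices $M_{\mathfrak{c}}$; since all ideals become locally principal after completion, the genus of $M$ is a single genus of cardinality $|H(\Q(\zeta))|$, its members indexed by their Steinitz class $\kappa\in H(\Q(\zeta))$. On the cohomological side, for $C_p$ cyclic one computes $H^2(C_p,\Z)\cong\Z/p$ while $H^2(C_p,\mathfrak{a})=0$ and $H^2(C_p,M_{\mathfrak{c}})=0$; hence $H^2(C_p,M)\cong(\Z/p)^{a}$, where $a\ge1$ is the number of trivial summands of $M$, and the torsion-free classes are exactly the tuples with all coordinates nonzero. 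As completion identifies all Steinitz classes and, via $\widehat{\Z}^\times\twoheadrightarrow(\Z/p)^\times$, all special classes, every such pair of the given type has the same profinite completion; thus $\mathfrak{g}(\Gamma)$ is precisely this set of pairs modulo abstract isomorphism, with $\mathrm{Aut}(C_p)=\G(p)$ acting on $H^2(C_p,M)$ through the scalar $k\in(\Z/p)^\times$ and on $\kappa$ through the Galois action.

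The heart of the proof --- and the step I expect to be the main obstacle --- is to determine the image in $\mathrm{GL}\big((\Z/p)^a\big)$ of the genuine integral lattice automorphisms, since this decides how much of the scalar twisting by $\G(p)$ can be absorbed while fixing a special class. In case (ii) I would show this image contains all of $(\Z/p)^\times$: when $a\ge2$ this follows from surjectivity of $SL_a(\Z)\to SL_a(\F_p)$ and its transitivity on nonzero vectors; when a rank-$p$ summand $M_{\mathfrak{c}}$ is present it follows from the cyclotomic units inside $\mathrm{End}_{C_p}(M_{\mathfrak{c}})$, using the congruence $\tfrac{1-\zeta^{k}}{1-\zeta}\equiv k \pmod{(\zeta-1)}$, which lets an automorphism act on the trivial summand by an arbitrary residue $k$. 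Thus every Galois twist $\sigma_k$ can be compensated by a lattice automorphism restoring $\beta$, so all of $\G(p)$ acts on $\kappa$ and $|\mathfrak{g}(\Gamma)|=|\G(p)\backslash H(\Q(\zeta))|$. In case (i), by contrast, $a=1$ and the mechanisms just used are absent: the ideal summands act trivially on $H^2(C_p,M)$ and $\mathrm{Aut}(\Z)=\{\pm1\}$, so the lattice automorphisms scale $\beta$ only by $\pm1$. A slice argument fixing $\beta$ then shows that the surviving transformations of $\kappa$ form the order-two subgroup generated by $\sigma_{-1}$, namely complex conjugation.

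Finally I would finish by an orbit count, obtaining $|\mathfrak{g}(\Gamma)|=|\langle\sigma_{-1}\rangle\backslash H(\Q(\zeta))|$ in case (i) and $|\G(p)\backslash H(\Q(\zeta))|$ in case (ii). It remains to identify the action of $\sigma_{-1}$ on the class group with inversion, which is the point where I would be most careful: complex conjugation sends $[\mathfrak{a}]$ to $[\overline{\mathfrak{a}}]=[\mathfrak{a}]^{-1}$ on $H(\Q(\zeta))$, so $\langle\sigma_{-1}\rangle\backslash H(\Q(\zeta))=C_2\backslash H(\Q(\zeta))$ with $C_2$ acting by inversion, completing both cases.
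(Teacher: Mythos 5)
Your proposal is correct in outline and lands on the same skeleton as the paper --- first show that the profinite completion remembers exactly the invariants $(a,b,c)$ of the translation lattice and forgets the ideal class, then count ideal classes modulo the symmetries preserving the abstract isomorphism type of $\Gamma$ --- but you travel a genuinely different road through the second half. The paper quotes Charlap's classification (Propositions \ref{C1-Bieberbach} and \ref{C2-Bieberbach}) as a black box: isomorphism classes are parametrized by $(a,b,c;\theta)$ with $\theta\in\G(p)\backslash H(\Q(\zeta))$, resp.\ $(b,\theta)$ with $\theta\in C_2\backslash H(\Q(\zeta))$, so it only has to prove the profinite half (Theorem \ref{main Theorem 1}, via the splitting of Lemma \ref{characterization-Bieberbach group with n(G)=1} and the local Krull--Schmidt argument of Lemma \ref{condition to isomorphism}). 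You instead re-derive Charlap's theorem: you parametrize torsion-free extensions by pairs (lattice, special class in $H^2(C_p,M)$) and determine by hand which scalars in $(\Z/p)^\times$ are realized by integral lattice automorphisms ($SL_a$ when $a\ge 2$, cyclotomic units through a rank-$p$ summand when $c\ge 1$, only $\pm 1$ in the exceptional case). This is more self-contained and makes visible exactly why the exceptional case retains only $C_2\subset\G(p)$; the cost is reproving a chapter of Charlap's book. Your profinite half (cohomology commutes with completion, automorphisms of $\widehat{M}$ act transitively on nonzero classes mod $p$, all Steinitz classes die locally) is a valid replacement for the paper's route.

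Two points need repair. First, for $G=C_p$ the only subgroup of prime order is $G$ itself, so the special (torsion-free) classes in $H^2(C_p,M)\cong(\Z/p)^a$ are the \emph{nonzero} tuples, not the tuples with all coordinates nonzero: already for $a=2$ the class $(1,0)$ gives a torsion-free extension. The slip is harmless for the final count, since your transitivity arguments act on all nonzero vectors, but the parametrization should be stated correctly. Second, the identification $[\overline{\mathfrak{a}}]=[\mathfrak{a}]^{-1}$ requires $\mathfrak{a}\overline{\mathfrak{a}}$ to be principal, i.e.\ that the extension to $\Q(\zeta)$ of $N_{\Q(\zeta)/\Q(\zeta)^{+}}(\mathfrak{a})$ be trivial in $H(\Q(\zeta))$, which can fail when $h_p^{+}>1$. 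What your argument (and Charlap's) actually produces is the action of complex conjugation, the order-two subgroup of $\G(p)$; the paper's statement makes the same ``inversion'' identification, so this is a shared imprecision rather than a gap relative to the paper, but it is precisely the point at which you said you would be careful, and the care is warranted.
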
 
		
		As a consequence of this result we obtain the following statement.
		
		\begin{cor}\label{cor-1}
			Let $\Gamma$ be a $n$-dimensional Bieberbach group with holonomy group $G$ of prime order. Then 
			
			$|\mathfrak{g}(\Gamma)|=1$ if and only if $|G|\leq 19$.
		\end{cor}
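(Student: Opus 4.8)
The plan is to read off the corollary from Theorem~\ref{main Theorem} together with one classical arithmetic input. In both alternatives of the theorem the genus is expressed as the number of orbits $|A\backslash H(\Q(\zeta))|$ of a finite group $A$ acting on the ideal class group of $\Q(\zeta)=\Q(\zeta_p)$, where $A=C_2$ acts by inversion in case (i) and $A=\G(p)$ acts by its natural Galois action in case (ii). The first point I would establish is that, whichever action occurs, the trivial ideal class is a fixed point: inversion obviously fixes $1$, and every element of $\G(p)$ carries principal ideals to principal ideals, hence fixes the identity of $H(\Q(\zeta))$. Thus $\{1\}$ is always an orbit on its own.

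Next I would argue that, in either case, $|A\backslash H(\Q(\zeta))|=1$ holds if and only if the class group $H(\Q(\zeta))$ is trivial. If the class group is trivial the quotient is a single point. Conversely, a single orbit means the action is transitive; since the identity class is fixed, transitivity forces $H(\Q(\zeta))=\{1\}$. (Equivalently, for the $C_2$-action one sees directly that whenever $|H(\Q(\zeta))|\geq 2$ the fixed orbit $\{1\}$ together with any further element produces at least two orbits.) Consequently, uniformly across cases (i) and (ii), one has $|\mathfrak{g}(\Gamma)|=1$ if and only if the class number $h_p:=|H(\Q(\zeta))|$ of the $p$-th cyclotomic field equals $1$; in particular the dichotomy of the theorem is irrelevant for this count.

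It then remains to invoke the classical determination of the primes with $h_p=1$. It is known from the tables of cyclotomic class numbers (Masley, Montgomery--Uchida) that $h_p=1$ precisely for $p\in\{2,3,5,7,11,13,17,19\}$, while $h_{23}=3$ and $h_p>1$ for every prime $p\geq 23$. Hence $h_p=1$ if and only if $p\leq 19$, that is, if and only if $|G|=p\leq 19$, which is exactly the asserted equivalence. The only genuinely external ingredient is this last number-theoretic fact; the main (and only mildly delicate) step on the group-theoretic side is checking that both the inversion action and the full Galois action collapse to the \emph{same} triviality criterion, which becomes immediate once the identity class is recognized as a fixed point of each action.
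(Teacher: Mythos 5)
Your argument is correct and is essentially the paper's own proof, which simply cites Theorem~\ref{main Theorem} together with Proposition~\ref{h_p=1 iff p<19} (Montgomery--Uchida); your additional observation that the identity class is a fixed point of both actions, so that the orbit count is $1$ exactly when $H(\Q(\zeta))$ is trivial, is the small step the paper leaves implicit.
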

		
		In other words, Corollary \ref{cor-1} asserts that a $n$-dimensional compact flat manifold $X$ with holonomy group $G$ of prime order is determined among all $n$-dimensional compact flat manifolds by $\widehat{\pi_{1}(X)}$ if and only if $|G|\leq 19$. Furthermore, using the crystallographic restriction theorem (see \cite{Hil85}) we obtain the following statement. 
		\begin{cor}\label{cor-2}
			Let $X$ be a $n$-dimensional compact flat manifold with holonomy group of prime order. If  $n\leq 21$, then $X$ is determined among all $n$-dimensional compact flat manifolds by the profinite completion of its fundamental group.
		\end{cor}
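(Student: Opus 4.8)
The plan is to reduce Corollary \ref{cor-2} to Corollary \ref{cor-1} by bounding the order of the holonomy group in terms of the dimension $n$ via the crystallographic restriction theorem. Write $\Gamma=\pi_1(X)$, let $G$ be the holonomy group of prime order $p$, and let $M\cong\Z^n$ be the translation subgroup. First I would record that $G$ acts faithfully on $M$, which yields a faithful representation $G\hookrightarrow GL_n(\Z)$; in particular $GL_n(\Z)$ contains an element $A$ of order exactly $p$. Since $A^p=I$ but $A\neq I$, the minimal polynomial of $A$ over $\Q$ is divisible by the $p$-th cyclotomic polynomial $\Phi_p(x)=1+x+\dots+x^{p-1}$, which is irreducible of degree $\varphi(p)=p-1$. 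Hence $n\geq p-1$; equivalently, the crystallographic restriction theorem \cite{Hil85} forces $n\geq p-1$ directly (the bound being trivial for $p=2$).

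Next I would combine this with the hypothesis $n\leq 21$ to obtain $p\leq n+1\leq 22$. Since the largest prime not exceeding $22$ is $19$ (the next prime, $23$, exceeds $22$), it follows that $|G|=p\leq 19$. By Corollary \ref{cor-1} this immediately gives $|\mathfrak{g}(\Gamma)|=1$.

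Finally I would translate $|\mathfrak{g}(\Gamma)|=1$ into the geometric conclusion. If $X'$ is any $n$-dimensional compact flat manifold with $\widehat{\pi_1(X')}\cong\widehat{\pi_1(X)}$, then $\pi_1(X')$ is a finitely generated residually finite group lying in $\mathfrak{g}(\Gamma)$, so that $\pi_1(X')\cong\pi_1(X)$. Since by Bieberbach's theorem a compact flat manifold is determined up to diffeomorphism by its fundamental group, $X'$ is diffeomorphic to $X$; thus $X$ is determined among all $n$-dimensional compact flat manifolds by $\widehat{\pi_1(X)}$. The only genuine input beyond bookkeeping is the crystallographic bound $n\geq p-1$ for an order-$p$ element of $GL_n(\Z)$; everything else is a direct application of Corollary \ref{cor-1} together with Bieberbach rigidity, so I do not expect a substantial obstacle here.
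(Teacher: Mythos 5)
Your proof is correct and follows essentially the same route as the paper: invoke the crystallographic restriction $n\geq p-1$ (Proposition \ref{crystallographic restriction}) to force $p\leq 19$ when $n\leq 21$, then apply Corollary \ref{cor-1}. The extra details you supply (the cyclotomic-polynomial justification of the bound and the final translation via Bieberbach rigidity) are just an expansion of what the paper leaves implicit.
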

		
		The proof of Theorem \ref{main Theorem} uses the Charlap's classification of Bieberbach groups with holonomy group of prime order (see Subsection \ref{section-Bieberbach groups}). Furthermore, a crucial fact is the following correspondence that we establish  showing   that the genus $\mathfrak{g}(\Gamma)$ in our case coicides with the genus of the $\Z G$-lattice $M$ (in the represantation theory sense) in case (ii) of Theorem \ref{main Theorem}. 
		
		\begin{theorem}\label{main Theorem 1} Let $M$ be a free abelian group of rank $n$ and $G$ a group of order $p$. 
			Let $\Gamma_1$ and $\Gamma_2$ be $n$-dimensional Bieberbach groups which are extensions of $M$ by $G$. If  $M_1$ and $M_2$ are $G$-modules induced by the action of $\Gamma_1$ and $\Gamma_2$ on $M$, respectively, then 
			
			\begin{enumerate}
				
				\item[(i)]    $\Gamma_1\cong \Gamma_2$  if and only if either $G$-modules $M_1$ and $M_2$ satisfy hypothesis (ii) of Theorem \ref{main Theorem} and are isomorphic or $M_1$ and $M_2$ satisfy hypothesis (i) of Theorem \ref{main Theorem} and  isomorphic up to a twist of $G$ by inversion. 
				\item[(ii)]			
				$\widehat{\Gamma}_1\cong\widehat{\Gamma}_2$  if and only if  $\widehat{M}_1\cong\widehat{M}_2$ as $G$-modules.
			\end{enumerate}
		\end{theorem}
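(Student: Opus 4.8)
The plan is to treat both parts through the classical correspondence, due to Charlap, between Bieberbach groups $\Gamma$ with holonomy $G=C_p$ and pairs $(M,\alpha)$, where $M$ is the translation $\Z G$-lattice and $\alpha\in H^2(G,M)$ is the extension class, torsion-freeness being equivalent to $\alpha\neq0$. The first step, common to (i) and (ii), is to show that the translation subgroup is characteristic. Abstractly, $M_i=\{x\in\Gamma_i:\ [\Gamma_i:C_{\Gamma_i}(x)]<\infty\}$: for $x\in M_i$ the centralizer contains $M_i$, while for $x\notin M_i$ its image generates $G$ and acts on $M_i$ with fixed sublattice $M_i^{G}$ of infinite index (the holonomy action being faithful), so $C_{\Gamma_i}(x)$ has infinite index. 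The same computation with ``open'' in place of ``finite index'' shows $\widehat M_i=\{x\in\widehat\Gamma_i:\ C_{\widehat\Gamma_i}(x)\text{ is open}\}$, using that the $G$-action on $\widehat M_i=\widehat\Z^{\,n}$ is still faithful with no open fixed submodule. Consequently any isomorphism $\Gamma_1\to\Gamma_2$ (resp.\ $\widehat\Gamma_1\to\widehat\Gamma_2$) restricts to a $\theta$-semilinear isomorphism $\psi\colon M_1\to M_2$ for some $\theta\in\mathrm{Aut}(G)\cong\F_p^{\times}$ and satisfies $\psi_{*}\alpha_1=\theta^{*}\alpha_2$; conversely such a compatible pair assembles into an isomorphism of the groups.

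Next I would record the representation-theoretic input. By Diederichsen--Reiner every $\Z G$-lattice is $M\cong\Z^{c}\oplus\bigoplus_i\mathfrak a_i\oplus\bigoplus_j M_{\mathfrak b_j}$, with the $\mathfrak a_i$ ideals of $R=\Z[\zeta]$ (rank $p-1$) and the $M_{\mathfrak b_j}$ the rank-$p$ indecomposables; hypothesis (i) of Theorem~\ref{main Theorem} is exactly $c=1$ with no rank-$p$ summand, hypothesis (ii) the complementary case. A Tate-cohomology computation gives $H^2(G,M)\cong\widehat H^{0}(G,M)\cong(\Z/p)^{c}$, supported on the trivial summands (each $\mathfrak a_i$ and each $M_{\mathfrak b_j}$ has $\widehat H^{0}(G,-)=0$), and the reduction $H^2(G,M)\to H^2(G,\widehat M)$ is an isomorphism, so $\widehat\alpha_i\neq0$ as well. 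I would also note two functorial facts: twisting $M$ by $\sigma_r\in\G(p)$ sends the Steinitz class $[\prod\mathfrak a_i\prod\mathfrak b_j]\in H(\Q(\zeta))$ to $\sigma_r$ of it and multiplies the class in $(\Z/p)^{c}$ by $r$, while producing an \emph{isomorphic} group (relabel $G$ by $\sigma_r$); and $\widehat M$ is insensitive to the Steinitz class, since every ideal of $R$ is locally principal, whence $\widehat{\mathfrak a}\cong\widehat R$ and $\widehat M$ depends only on $(a,b,c)$.

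For part (ii) the forward implication is then immediate: the semilinear isomorphism gives $\widehat M_1\cong\widehat M_2^{(\theta)}$, and since $\widehat M$ ignores the Steinitz class the twist is invisible, so $\widehat M_1\cong\widehat M_2$. For the converse, fix a $\widehat\Z G$-isomorphism $\widehat M_1\to\widehat M_2$ and transport $\widehat\alpha_1$; both classes are nonzero in $(\Z/p)^{c}$, and $\mathrm{Aut}_{\widehat\Z G}(\widehat M_2)$ acts transitively on the nonzero classes, because it contains $\mathrm{GL}_c(\widehat\Z)$ acting on the trivial summands and $\mathrm{GL}_c(\widehat\Z)\twoheadrightarrow\mathrm{GL}_c(\F_p)$ is transitive on nonzero vectors (already for $c=1$, where $\widehat\Z^{\times}\twoheadrightarrow\F_p^{\times}$ through the $p$-adic component). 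Matching the classes yields $\widehat\Gamma_1\cong\widehat\Gamma_2$.

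Part (i) is the delicate one, and the crux is the image of $\rho\colon\mathrm{Aut}_{\Z G}(M)\to\mathrm{Aut}\big(H^2(G,M)\big)=\mathrm{GL}_c(\F_p)$, now over $\Z$ rather than $\widehat\Z$. When $c\geq2$ the integral elementary matrices on the trivial summands already give all of $\mathrm{GL}_c(\F_p)$, so the class is absorbed. When $c=1$ the two hypotheses diverge: in case (i) the trivial summand is $\mathrm{Hom}_{\Z G}$-orthogonal to every $\mathfrak a_i$ (as $\mathfrak a_i^{G}=0=\mathrm{Hom}_{\Z G}(\mathfrak a_i,\Z)$), so $\rho$ has image $\{\pm1\}$; whereas in case (ii) a rank-$p$ summand $M_{\mathfrak b}$ does connect to $\Z$, and using the conductor square $\Z G=\Z\times_{\F_p}R$ together with the cyclotomic units $1+\zeta+\dots+\zeta^{k-1}\in R^{\times}$, which reduce to every $k\in\F_p^{\times}$, one builds automorphisms realizing all of $\F_p^{\times}$. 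Combining $\rho$ with the twist action (Steinitz class $\mapsto\sigma_r$, class $\mapsto r\cdot$) and the sign from $\psi$, the orbits of nonzero Bieberbach classes become: in case (ii) two groups are isomorphic exactly when their modules agree up to a Galois twist, which after normalising the class is the statement of Theorem~\ref{main Theorem 1}(i) and yields $|\G(p)\backslash H(\Q(\zeta))|$ classes; in case (i) only the inversion twist $\sigma_{-1}$ survives, giving isomorphism up to a twist by inversion and $|C_2\backslash H(\Q(\zeta))|$ classes. The main obstacle is precisely this transitivity dichotomy for $c=1$: proving that the presence of a rank-$p$ summand lets cyclotomic units act as the full $\F_p^{\times}$ on $H^2(G,M)$, while their absence leaves only $\pm1$.
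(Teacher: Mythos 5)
Your proposal is correct in outline but takes a genuinely different route from the paper. The paper disposes of part (i) by citing Charlap's classification wholesale (Proposition \ref{item (i) th 1.4}, which rests on Propositions \ref{Cha-th 6.2}, \ref{C1-Bieberbach}, \ref{C2-Bieberbach}, together with Remark \ref{charlap remark}), and proves the converse of (ii) by splitting $\Gamma=M_{n-1}\rtimes C$ via Szczepa\'{n}ski's theorem (Lemma \ref{characterization-Bieberbach group with n(G)=1}) and invoking Krull--Schmidt for $\mathbf{Z}_pG$-lattices. You instead open the black box: you recover the translation subgroup as the set of elements with finite-index (resp.\ open) centralizer, encode each group by the pair consisting of the lattice and its extension class in $H^2(G,M)\cong(\Z/p)^{a}$ (supported on the trivial summands; $a$ in the paper's notation is your $c$), and settle both parts by determining which subgroup of $\mathrm{GL}\bigl(H^2(G,M)\bigr)$ is realized by (semi)linear automorphisms. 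For (ii) this is arguably cleaner than the paper's argument, since over $\widehat\Z$ the Steinitz class disappears and $\widehat\Z^{\times}\twoheadrightarrow\F_p^{\times}$ already makes $\mathrm{Aut}_{\widehat\Z G}(\widehat M)$ transitive on nonzero classes, so no splitting theorem is needed. For (i) your dichotomy is exactly the content of Charlap's Theorems 6.3/6.4, so you are in effect reproving the result the paper cites; the cyclotomic-unit step you rightly flag as the crux does work --- for $\Z G\oplus\Z$ the $\Z G$-endomorphism with matrix entries $u_k=1+x+\cdots+x^{k-1}$, the norm element $N$, $s\varepsilon$ ($\varepsilon$ the augmentation) and $\delta$, subject to $k\delta-sp=1$, is an automorphism acting on $H^2$ by $\delta\equiv k^{-1}\pmod p$ --- but it is the one step you would still have to write out in full, including for a general summand $(A,a_0)$ in place of $\Z G$.

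Two points need attention. First, integral elementary matrices generate $\mathrm{SL}_a(\Z)$, and $\mathrm{GL}_a(\Z)\to\mathrm{GL}_a(\F_p)$ is \emph{not} surjective (its determinant lands in $\{\pm1\}$); what you need, and what is true, is that $\mathrm{SL}_a(\F_p)$ is already transitive on nonzero vectors for $a\geq2$, so the conclusion stands. Second, your analysis of the non-exceptional case yields that $\Gamma_1\cong\Gamma_2$ if and only if $M_1\cong (M_2)^{\psi}$ for some $\psi\in\mathrm{Aut}(G)$, i.e.\ the modules agree up to an \emph{arbitrary} Galois twist, which can move the Steinitz class within its $\G(p)$-orbit. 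This is the statement of Proposition \ref{item (i) th 1.4}, it is what the count $|\G(p)\backslash H(\Q(\zeta))|$ in Theorem \ref{main Theorem}(ii) actually requires, and it is consistent with Charlap's Theorem 6.3; but it is strictly weaker than the literal wording ``are isomorphic'' in part (i) of the statement. Do not try to ``normalise'' this discrepancy away: when the Steinitz class is not fixed by $\sigma$ the twisted module $(M_2)^{\sigma}$ is genuinely not $\Z G$-isomorphic to $M_2$, so the semilinear formulation is the one your argument proves and the one you should keep.
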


		Throughout this paper, we use the following notations and terminology:  $\widehat{\Gamma}$ denotes the profinite completion of a group $\Gamma$ and $|S|$ denotes the cardinality of a set $S$, $p$ will be a prime number, $\zeta$ a primitive $p$-th root of unity, $H(\Q(\zeta))$ the ideal class group of the cyclotomic field $\Q(\zeta)$ with ring of integers $\Z[\zeta]$, $h_p$ the order of $H(\Q(\zeta))$, $\mathcal{G}(p)$ the Galois group of the field $\Q(\zeta)$ over $\Q$ and $G$ a cyclic group of order $p$ generated by an element $x$. 	
		
		\section{Preliminaries}	 
		
		\subsection{Modules over groups of prime order}\label{C_p-modules}
		
		\begin{defn}  Let $H$ be a finite group.
			A  $\Z H$-lattice is a  $\Z H$-module which is free as a $\Z$-module.
		\end{defn}
		
		According to \cite[Theorem 74.3]{CR62} there are exactly $2h_p+1$ indecomposable $ \Z G$-lattices (up to isomorphism). Namely denoting by $A_i \ (1\leq i\leq h_p)$   representatives of the $h_p$ distinct ideal classes of $\Q(\zeta)$, the indecomposable $ \Z G$-lattices are:
		\begin{enumerate}
			\item[(a)] the trivial module $\Z$.
			\item[(b)]  $A_i$  with action of $G=\langle x\rangle$ given by $x\cdotp a:=\zeta a$, $a\in A_i$. 
			\item[(c)] $h_p$ modules $(A_i,a_i):=A_i\oplus \Z$, where  $a_i\in A_i\setminus (\zeta -1)A_i$ and the action of $G$ is defined by $x\cdotp (a,r):=(\zeta a+ ra_i,r)$, for $a\in A_i$ and $r\in \Z$.
		\end{enumerate}
		
		\begin{remark}\label{(A,a)=ZC}
			If $p$ does not divide $n \ (n\in \Z)$, then $(\Z[\zeta],n)\cong \Z G$ is a cyclic free $\Z G$-module (see \cite[p. 512 and p. 514]{CR62}).
		\end{remark}
		
		Thus we have
		
		\begin{prop}[\cite{CR62}, Theorem 74.3]\label{Diederichsen-Reiner}
			Every finitely generated  $\Z G$-lattice $M$ is isomorphic to a direct sum
			\begin{equation}\label{sum 1}
			M=\bigoplus_{i=1}^{c}(A_i,a_i)\oplus\bigoplus_{j=1}^{b}A_{j}\oplus \Z^{a}
			\end{equation}
			where $\Z^{a}$ is a trivial $\Z G$-module of rank $a$. 
		\end{prop}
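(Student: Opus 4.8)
The plan is to read the statement as a repackaging of the classification of indecomposable $\Z G$-lattices, and to prove it in three steps: reduce to that classification, establish the classification, and then regroup the summands by type. The reduction is the soft step. Every $\Z G$-lattice is a finite direct sum of indecomposable sublattices, which I would prove by induction on $\Z$-rank: a direct summand of a lattice is again a lattice, and any nontrivial splitting $M\cong M_1\oplus M_2$ strictly lowers the rank of each factor, so since the rank is a nonnegative integer the process terminates. Note that only \emph{existence} of the decomposition (\ref{sum 1}) is asserted, so no uniqueness statement (Krull--Schmidt) is needed; indeed uniqueness genuinely fails here, which is precisely what forces the later genus analysis.

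For the classification itself I would exploit the arithmetic of $\Z G=\Z[x]/(x^{p}-1)$. Since $x^{p}-1=(x-1)\Phi_{p}(x)$ with $\Phi_{p}$ the $p$-th cyclotomic polynomial, there are ring homomorphisms $\varepsilon\colon\Z G\to\Z$ ($x\mapsto 1$) and $\pi\colon\Z G\to\Z[\zeta]$ ($x\mapsto\zeta$), and $\Z G$ is the conductor (fibre-product) ring $\Z\times_{\F_p}\Z[\zeta]$, where $\Z\to\F_p$ is reduction mod $p$ and $\Z[\zeta]\to\F_p=\Z[\zeta]/(\zeta-1)$ is the residue map at the prime above $p$. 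In particular $\Q G\cong\Q\times\Q(\zeta)$, so each lattice $M$ carries two well-defined multiplicities, that of the trivial rational representation and that of the faithful one over $\Q(\zeta)$.

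Milnor/conductor patching then presents each $\Z G$-lattice as a pullback of a free $\Z$-lattice $P\cong\Z^{a}$ and a $\Z[\zeta]$-lattice $Q$ along an $\F_p$-linear gluing $\theta$ between the reductions $P/pP$ and $Q/(\zeta-1)Q$. Because $\Z[\zeta]$ is a Dedekind domain, $Q$ is a direct sum of fractional ideals, determined up to isomorphism by its rank and its Steinitz class in $H(\Q(\zeta))$, which furnishes the $h_p$ representatives $A_i$. Decomposing $\theta$ into indecomposable blocks isolates exactly three shapes: the uncoupled trivial lattice $\Z$ (type (a)), an uncoupled ideal $A_i$ (type (b)), and a coupled rank-$p$ module in which one copy of $\Z$ is fused to $A_i/(\zeta-1)A_i$ through a choice $a_i\notin(\zeta-1)A_i$ (type (c)), matching the count $1+h_p+h_p=2h_p+1$. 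Grouping the indecomposable summands of $M$ by type then yields (\ref{sum 1}). The main obstacle is this last gluing analysis --- verifying that each coupled module $(A_i,a_i)$ is genuinely indecomposable and that every mixed indecomposable arises this way --- since the preceding reductions are formal; this is the substance of \cite[Theorem 74.3]{CR62}, which I would cite for the block-decomposition step rather than reprove in full.
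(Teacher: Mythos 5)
The paper does not actually prove this proposition: it is quoted directly from Curtis--Reiner \cite{CR62}, Theorem~74.3 (the Diederichsen--Reiner classification), so there is no internal argument to compare yours against. Your outline is a faithful sketch of the standard proof of that cited theorem. The reduction to indecomposables by induction on $\Z$-rank is sound (a direct summand of a $\Z G$-lattice is again a lattice, and ranks strictly drop), the identification of $\Z G$ with the fibre product $\Z\times_{\F_p}\Z[\zeta]$ and the resulting patching description of lattices --- a free $\Z$-part, a $\Z[\zeta]$-part which by Steinitz is a sum of ideals determined by rank and ideal class, and an $\F_p$-linear gluing --- is the modern form of Reiner's original extension-theoretic argument, and you correctly locate the genuine content in verifying that the coupled modules $(A_i,a_i)$ are indecomposable and exhaust the mixed indecomposables; deferring that step to the very reference the paper cites is legitimate here. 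One caveat on your aside about uniqueness: while Krull--Schmidt does fail for $\Z G$-lattices (individual ideal classes in the decomposition can be reshuffled), the multiplicities $a$, $b$, $c$ together with the product class $[A_1\cdots A_c B_1\cdots B_b]$ \emph{are} complete invariants, and the paper relies on exactly this later through the notation $M(a,b,c;[A])$; for the existence statement at hand, though, your observation that no uniqueness is needed is correct.
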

		
		\begin{defn}\label{exceptional}
			We will say that a $\Z G$-module $M$ is exceptional if in decomposition \eqref{sum 1} we have the invariants $a=1$ and $c=0$, i.e., if $M=\bigoplus_{j=1}^{b}A_{j}\oplus \Z$.
		\end{defn}
		
		Since $\mathbf{Z}_p G$ is local ring, for $\mathbf{Z}_p G$-modules we have the Krull-Schmidt-Azumaya's Theorem.
		
		\begin{prop}[\cite{CR81}, Theorem 6.12]\label{KSA}
			Let $R$ is a complete commutative noetherian local ring and $M$ be a finitely generated  $R$-module.
			Then $M$ is  a  direct sum of indecomposable submodules. Furthermore, if
			\begin{equation*}
			M=\bigoplus_{i=1}^{r}M_{i}=\bigoplus_{j=1}^{s}N_{j}
			\end{equation*}
			are two such sums, then $r=s$ and $M_1\cong N_{j_1},\cdots, M_r\cong N_{j_r}$, where $\{j_1,\cdots,j_r\}$ is some permutation of $\{1,\cdots,r\}$.
		\end{prop}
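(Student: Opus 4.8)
The plan is to prove the two assertions—existence of an indecomposable decomposition and its essential uniqueness—separately, obtaining the first from the chain condition and the second from the fact that completeness forces the endomorphism rings of the indecomposable summands to be local.

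First I would settle existence. Since $R$ is Noetherian and $M$ is finitely generated, $M$ is a Noetherian module and hence satisfies the ascending chain condition on submodules. If $M$ admitted no decomposition into finitely many indecomposables, then $M$ would itself be decomposable, $M=M_1\oplus M_1'$, with (say) $M_1'$ again not expressible as a finite sum of indecomposables; iterating produces a strictly ascending chain $M_1\subsetneq M_1\oplus M_2\subsetneq\cdots$ of direct summands, contradicting the ACC. This yields $M=\bigoplus_{i=1}^{r}M_i$ with each $M_i$ indecomposable.

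The heart of the argument is the lemma: if $N$ is a finitely generated indecomposable $R$-module, then $E=\mathrm{End}_R(N)$ is a local ring. Here completeness enters decisively. Because $N$ is finitely generated over the Noetherian ring $R$, $E$ is a module-finite $R$-algebra; being module-finite over a complete local Noetherian ring with maximal ideal $\mathfrak{m}$, the algebra $E$ is $\mathfrak{m}$-adically complete and $E/\mathfrak{m}E$ is a finite-dimensional algebra over the residue field, hence Artinian, with $\mathfrak{m}E\subseteq\mathrm{rad}(E)$. Consequently idempotents lift along $E\to E/\mathrm{rad}(E)$ and $E/\mathrm{rad}(E)$ is semisimple Artinian. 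Since $N$ is indecomposable, $E$ contains no idempotents other than $0$ and $1$; as idempotents lift, the same holds in $E/\mathrm{rad}(E)$, forcing this semisimple Artinian quotient to be a division ring. Therefore every element of $E$ is either a unit or lies in $\mathrm{rad}(E)$, i.e. $E$ is local.

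Finally I would deduce uniqueness by the standard exchange argument of Azumaya. Given two decompositions $M=\bigoplus_{i=1}^{r}M_i=\bigoplus_{j=1}^{s}N_j$ into indecomposables, each summand has local endomorphism ring by the lemma. Writing $1_{M_1}$ as the sum over $j$ of the composites $M_1\hookrightarrow M\twoheadrightarrow N_j\hookrightarrow M\twoheadrightarrow M_1$ and using that in the local ring $\mathrm{End}(M_1)$ the non-units form a proper ideal, not every composite can be a non-unit, so at least one is an automorphism of $M_1$; a short diagram chase then shows that the corresponding $M_1\to N_j$ is an isomorphism and $M=N_j\oplus\bigoplus_{i\geq 2}M_i$. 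Removing the matched pair $M_1\cong N_j$ and inducting on $r$ yields $r=s$ together with a permutation matching each $M_i$ to an isomorphic $N_j$. The main obstacle is precisely the endomorphism-ring lemma: everything downstream is formal once the summands are known to have local endomorphism rings, and it is exactly here that the hypothesis \emph{complete} is indispensable, since without it idempotents need not lift and the endomorphism ring of an indecomposable module can fail to be local.
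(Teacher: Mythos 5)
Your proof is correct. The paper does not actually prove this proposition---it is quoted as a known result from \cite{CR81}---and your argument (existence from the ascending chain condition, locality of the endomorphism ring of an indecomposable summand via completeness and lifting of idempotents, then Azumaya's exchange argument for uniqueness) is essentially the standard proof given in that reference, so there is nothing to compare against within the paper itself.
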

		
		There are exactly three non-isomorphic indecomposable
		$\mathbf{Z}_pG$-modules.
		
		\begin{prop}[\cite{HR62}, Theorem 2.6]\label{Heller-Reiner}
			The only indecomposable $\mathbf{Z}_p G$-modules (up to
			isomorphism) are   $\mathbf{Z}_p$, $\mathbf{Z}_p[\zeta]$ and $\mathbf{Z}_pG$.
		\end{prop}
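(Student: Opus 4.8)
The plan is to classify all $\mathbf{Z}_pG$-lattices outright and then read off the indecomposables. Write $\Lambda=\mathbf{Z}_pG=\mathbf{Z}_p[x]/(x^p-1)$, and put $y=x-1$ and $\omega=1+x+\cdots+x^{p-1}=\Phi_p(x)$, so that $y\omega=\omega y=0$ in $\Lambda$, the augmentation gives $\Lambda/(y)\cong\mathbf{Z}_p$, and $\Lambda/(\omega)\cong\mathbf{Z}_p[\zeta]$. I will use repeatedly that $S:=\mathbf{Z}_p[\zeta]$ is a discrete valuation ring, totally ramified over $\mathbf{Z}_p$ with uniformizer $\zeta-1$ and residue field $\mathbf{F}_p$; in particular every $S$-lattice is free. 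Conceptually, $\Lambda$ is the pullback (conductor square) $\mathbf{Z}_p\times_{\mathbf{F}_p}S$, and the fact that the two discrete valuation rings $\mathbf{Z}_p$ and $S$ share the residue field $\mathbf{F}_p$ is exactly what will force the answer to be three.

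Given a $\Lambda$-lattice $M$, I would attach two functorial pieces: the fixed sublattice $M^G=\ker(y\colon M\to M)$, which is $\mathbf{Z}_p$-pure and carries the trivial action, hence $M^G\cong\mathbf{Z}_p^{a}$; and the submodule $yM$, which is annihilated by $\omega$ and therefore is an $S$-module, and being $\mathbf{Z}_p$-free it is $S$-free, $yM\cong S^{t}$. Multiplication by $y$ then yields the canonical short exact sequence of $\Lambda$-modules $0\to M^G\to M\xrightarrow{y} yM\to 0$, i.e. an extension of $S^{t}$ by $\mathbf{Z}_p^{a}$. Since $M^G$ and $yM$ are recovered from $M$ intrinsically, isomorphism classes of lattices with invariants $(a,t)$ correspond bijectively to orbits of the class of this extension in $\mathrm{Ext}^1_\Lambda(S^{t},\mathbf{Z}_p^{a})$ under the natural action of $\mathrm{Aut}_\Lambda(\mathbf{Z}_p^{a})\times\mathrm{Aut}_\Lambda(S^{t})=GL_a(\mathbf{Z}_p)\times GL_t(S)$.

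Next I would compute the relevant $\mathrm{Ext}$ group. Because $G$ is cyclic, $S=\Lambda/(\omega)$ admits the periodic resolution \[\cdots\xrightarrow{y}\Lambda\xrightarrow{\omega}\Lambda\xrightarrow{y}\Lambda\xrightarrow{\omega}\Lambda\to S\to 0,\] using $\mathrm{ann}(\omega)=y\Lambda$ and $\mathrm{ann}(y)=\omega\Lambda$. Applying $\mathrm{Hom}_\Lambda(-,\mathbf{Z}_p)$ turns multiplication by $\omega$ and by $y$ on the trivial module into multiplication by $p$ and by $0$, respectively, so that $\mathrm{Ext}^1_\Lambda(S,\mathbf{Z}_p)\cong\mathbf{Z}_p/p\mathbf{Z}_p=\mathbf{F}_p$ and hence $\mathrm{Ext}^1_\Lambda(S^{t},\mathbf{Z}_p^{a})\cong\mathrm{Mat}_{a\times t}(\mathbf{F}_p)$. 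A short computation with the same resolution shows that the induced actions of $\mathrm{End}_\Lambda(\mathbf{Z}_p)=\mathbf{Z}_p$ and of $\mathrm{End}_\Lambda(S)=S$ on this $\mathbf{F}_p$ both factor through reduction modulo the maximal ideal, that is through $\mathbf{F}_p$; consequently the action above is that of $GL_a(\mathbf{F}_p)\times GL_t(\mathbf{F}_p)$ on $\mathrm{Mat}_{a\times t}(\mathbf{F}_p)$ by $(P,Q)\cdot X=PXQ^{-1}$.

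The remaining step is elementary linear algebra: the orbits of this action are classified by the rank $r$ of $X$, with canonical representative the matrix having an $r\times r$ identity block and zeros elsewhere. Reading the corresponding extension back, the lattice splits as $M\cong\Lambda^{r}\oplus\mathbf{Z}_p^{\,a-r}\oplus S^{\,t-r}$, where the $r$ glued blocks are each the non-split extension of class $1\in\mathbf{F}_p$, which is precisely the sequence $0\to\mathbf{Z}_p\to\Lambda\to S\to 0$ realized by $\Lambda$ itself. Thus every $\mathbf{Z}_pG$-lattice is a direct sum of copies of $\mathbf{Z}_p$, $\mathbf{Z}_p[\zeta]$ and $\mathbf{Z}_pG$. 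These three are indecomposable ($\mathbf{Z}_p$ and $S$ have the local endomorphism rings $\mathbf{Z}_p$ and $S$, while $\Lambda$ is indecomposable since $\Lambda/p\Lambda=\mathbf{F}_pG$ is local) and pairwise non-isomorphic (their $\mathbf{Z}_p$-ranks are $1$, $p-1$, $p$, and for $p=2$ the two rank-one lattices are distinguished by the $G$-action), and uniqueness of the decomposition is guaranteed by the Krull--Schmidt--Azumaya theorem (Proposition \ref{KSA}), which applies since $\mathbf{Z}_pG$ is complete local. The main obstacle is the third paragraph: setting up the functorial exact sequence together with the $\mathrm{Ext}$ computation and, above all, verifying that both automorphism groups act only through the common residue field $\mathbf{F}_p$ --- this coincidence is what collapses the a priori matrix problem to a single rank invariant and is the true reason only three indecomposables occur.
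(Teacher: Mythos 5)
The paper gives no proof of this proposition at all --- it is imported verbatim as Theorem 2.6 of \cite{HR62} --- so there is no internal argument to compare against; what you have written is in substance the classical Heller--Reiner/Diederichsen proof, and it is correct. Writing $\Lambda=\mathbf{Z}_pG$ and $S=\mathbf{Z}_p[\zeta]$ as you do, the key steps all check out: $y\omega=0$ yields the two quotients $\mathbf{Z}_p=\Lambda/(y)$ and $S=\Lambda/(\omega)$; the canonical sequence $0\to M^G\to M\to yM\to 0$ with $M^G\cong\mathbf{Z}_p^{a}$ and $yM\cong S^{t}$ is functorial, so isomorphism classes with fixed $(a,t)$ are governed by orbits in $\mathrm{Ext}^1_\Lambda(S^{t},\mathbf{Z}_p^{a})\cong\mathrm{Mat}_{a\times t}(\mathbf{F}_p)$; and the collapse of the $\mathrm{GL}_a(\mathbf{Z}_p)\times\mathrm{GL}_t(S)$-action through the common residue field to the single rank invariant is indeed the heart of the matter. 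Three points deserve an explicit line in a final write-up, though none is a gap. First, the statement must be read as classifying $\mathbf{Z}_pG$-\emph{lattices} (this is what \cite{HR62} mean by ``module'' and how the paper uses it in Remark \ref{Remark-2}); for arbitrary finitely generated modules it is false, since $\Z/p^k$ with trivial action is indecomposable for every $k$. Second, the reduction maps $\mathrm{GL}_a(\mathbf{Z}_p)\to\mathrm{GL}_a(\mathbf{F}_p)$ and $\mathrm{GL}_t(S)\to\mathrm{GL}_t(\mathbf{F}_p)$ are surjective because both rings are local; this is what lets you realize every pair $(P,Q)$ over $\mathbf{F}_p$ and reach the rank canonical form. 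Third, for an arbitrary extension $E$ of $S^{t}$ by $\mathbf{Z}_p^{a}$ one has $E^G=\mathbf{Z}_p^{a}$ and $yE\cap\mathbf{Z}_p^{a}=0$ (since $\omega$ annihilates $yE$ but acts as multiplication by $p$ on $\mathbf{Z}_p^{a}$), which is what makes the invariants $(a,t)$ and the canonical sequence compatible with the $\mathrm{Ext}$ classification and lets the canonical-form extension visibly split off the $r$ copies of $\Lambda$; the normalization of the non-split class to $1\in\mathbf{F}_p$ is harmless because all non-zero classes lie in one orbit and give middle term $\Lambda$.
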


		Let $M$ be a $\Z G$-lattice. Let us denote by $M_p=M\otimes_{\Z} \mathbf{Z}_p$ the $\mathbf{Z}_p G$-lattice. Note that $M_p$ is the pro-$p$ component of $\widehat{M}$.
		
		\begin{prop}[\cite{HR62}, Corollary 1.5]\label{HR62-Cor 1.5}
			The $\Z G$-module $M$ is indecomposable if and only if the corresponding $\mathbf{Z}_p G$-module $M_p$  is indecomposable.
		\end{prop}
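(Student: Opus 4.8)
The plan is to prove the two implications separately, reducing the forward one to the two classification results already in hand. The implication ``$M_p$ indecomposable $\Rightarrow M$ indecomposable'' is the routine one, and I would establish its contrapositive. Since the functor $-\otimes_{\Z}\mathbf{Z}_p$ is additive, any nontrivial $\Z G$-decomposition $M=N\oplus N'$ yields $M_p=N_p\oplus N'_p$; and for a nonzero lattice $N$ the module $N_p=N\otimes_{\Z}\mathbf{Z}_p$ is a nonzero free $\mathbf{Z}_p$-module, so the resulting decomposition of $M_p$ is again nontrivial. Hence $M$ decomposable forces $M_p$ decomposable.

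For the converse, I would invoke Proposition \ref{Diederichsen-Reiner}: an indecomposable $\Z G$-lattice is isomorphic to one of $\Z$, $A_i$, or $(A_i,a_i)$, so it suffices to check that each of these base-changes to one of the three indecomposable $\mathbf{Z}_p G$-modules listed in Proposition \ref{Heller-Reiner}. The case $M=\Z$ is immediate, giving $M_p=\mathbf{Z}_p$. For $M=A_i$, the key input is that $p$ is totally ramified in $\Q(\zeta)/\Q$, so that $\mathbf{Z}_p[\zeta]=\Z[\zeta]\otimes_{\Z}\mathbf{Z}_p$ is a complete discrete valuation ring with uniformizer $\zeta-1$ and residue field $\F_p$. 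Being a principal ideal domain, it forces every fractional ideal to become free of rank one after tensoring, whence $(A_i)_p\cong\mathbf{Z}_p[\zeta]$ as $\mathbf{Z}_p G$-modules (the $G$-action being multiplication by $\zeta$ throughout).

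The remaining case $M=(A_i,a_i)$ carries the main difficulty, and I expect the crux to be showing that the nondegeneracy condition $a_i\in A_i\setminus(\zeta-1)A_i$ is preserved under base change. I would argue that the inclusion $A_i\hookrightarrow (A_i)_p$ induces an isomorphism $A_i/(\zeta-1)A_i\cong (A_i)_p/(\zeta-1)(A_i)_p$, both equal to $\F_p$; thus $a_i$ is carried to an element outside the maximal ideal of $\mathbf{Z}_p[\zeta]$, i.e.\ to a unit. Rescaling the isomorphism $(A_i)_p\cong\mathbf{Z}_p[\zeta]$ so that $a_i\mapsto 1$ identifies $(A_i,a_i)_p$ with $(\mathbf{Z}_p[\zeta],1)$, which is $\mathbf{Z}_p G$ by the same computation as in Remark \ref{(A,a)=ZC}. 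In all three cases $M_p$ is one of $\mathbf{Z}_p$, $\mathbf{Z}_p[\zeta]$, $\mathbf{Z}_p G$, hence indecomposable by Proposition \ref{Heller-Reiner}, and the proof is complete.
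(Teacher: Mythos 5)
Your argument is correct, but there is nothing in the paper to compare it against: the paper states this proposition as a quoted result (Corollary~1.5 of \cite{HR62}) and gives no proof. What you supply is a genuine derivation, and it runs in the opposite logical direction to both the original source and the paper's own internal bookkeeping. Heller and Reiner obtain the corollary from general local--global results on lattices over orders, prior to and independently of any classification; the corollary is then an ingredient in building the list of indecomposables. You instead take the full Diederichsen--Reiner classification (Proposition~\ref{Diederichsen-Reiner} and the list of $2h_p+1$ indecomposables preceding it) together with the local classification (Proposition~\ref{Heller-Reiner}) as black boxes and verify the statement case by case. Since the paper imports both of those results from the literature, your route is not circular within the paper's framework, and it has the side benefit of establishing Remark~\ref{Remark-2} directly: note that the paper deduces Remark~\ref{Remark-2}(ii) \emph{from} the present proposition, whereas you compute $(A_i,a_i)_p\cong\mathbf{Z}_pG$ by hand, correctly identifying that the whole content of that case is that the normalization $a_i\in A_i\setminus(\zeta-1)A_i$ turns $a_i$ into a unit of the local ring $\mathbf{Z}_p[\zeta]$, after which the rescaling $a_i\mapsto 1$ and the computation of Remark~\ref{(A,a)=ZC} finish the job. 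The remaining steps (additivity of $-\otimes_{\Z}\mathbf{Z}_p$ and nonvanishing of $N_p$ for $N\neq 0$ in the easy direction; invertible modules over the discrete valuation ring $\mathbf{Z}_p[\zeta]$ becoming free of rank one; the isomorphism $A_i/(\zeta-1)A_i\cong(A_i)_p/(\zeta-1)(A_i)_p$) are all sound. The trade-off is that your proof is weaker as a matter of logical economy --- it consumes the classification rather than helping to prove it --- but as a verification within this paper it is complete.
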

		
		\begin{remark}\label{Remark-2}
			\begin{enumerate}
				\item[(i)]  The pro-$p$ completion of each non-zero ideal $A$ of $\Z[\zeta]$ is isomorphic to $\mathbf{Z}_p[\zeta]$ as  $\mathbf{Z}_pG$-modules, because $\mathbf{Z}_p[\zeta]$ is a local principal ideal domain.
				\item[(ii)] The pro-$p$ completion of $(A,a_0)$ is isomorphic to $\mathbf{Z}_pG$ as $\mathbf{Z}_pG$-modules, where $A$ is a  non-zero ideal of $\Z[\zeta]$. Indeed, this follows from Propositions \ref{Heller-Reiner} and \ref{HR62-Cor 1.5}. 	
			\end{enumerate}	 
		\end{remark}
		
		\begin{prop}[\cite{CR81}, Proposition 31.2 (ii)]\label{31.2 (ii)}
			Let $G$ be a finite group and $M$ and $N$ be $\Z G$-lattices. Then $M_q\cong N_q$ as $\mathbf{Z}_q G$-modules for all primes $q$   if and only if $M_p\cong N_p$ as $\mathbf{Z}_p G$-modules for all $p$ dividing $|G|$.
		\end{prop}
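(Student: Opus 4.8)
The forward implication is immediate, since the condition on the right is just the restriction to the divisors of $|G|$ of the condition on the left. The content is therefore the reverse implication: assuming $M_p\cong N_p$ as $\mathbf{Z}_pG$-modules for every prime $p$ dividing $|G|$, I would prove $M_q\cong N_q$ as $\mathbf{Z}_qG$-modules for \emph{every} prime $q$. For the primes $q$ dividing $|G|$ there is nothing to do. The whole point is thus to show that at a prime $q\nmid|G|$ the isomorphism class of the localization $M_q$ is already forced by the global lattice $M$, independently of $N$ — concretely, that it depends only on the rational representation $\Q\otimes_\Z M$. (We may assume $|G|>1$; the trivial group is immediate once ranks are matched.)

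The plan rests on two standard inputs. First, at a prime $q\nmid|G|$ the order $|G|$ is invertible in $\mathbf{Z}_q$, so by Maschke's theorem $\mathbf{Q}_qG$ is semisimple and $\mathbf{Z}_qG$ is a \emph{maximal} $\mathbf{Z}_q$-order in it, where $\mathbf{Q}_q$ denotes the field of fractions of $\mathbf{Z}_q$. Over a maximal order over a complete discrete valuation ring the theory of lattices is rigid: Krull--Schmidt--Azumaya applies (Proposition \ref{KSA}), and by the structure theory of maximal orders (see \cite{CR81}) each simple summand of $\mathbf{Q}_qG$ contributes exactly one indecomposable lattice, namely its unique lattice over the corresponding division-algebra order, so that the indecomposable $\mathbf{Z}_qG$-lattices $L$ are parametrized by the simple $\mathbf{Q}_qG$-modules via $L\mapsto \mathbf{Q}_q\otimes L$. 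Consequently two $\mathbf{Z}_qG$-lattices are isomorphic if and only if their rational spans are isomorphic as $\mathbf{Q}_qG$-modules. I would use this to reduce the desired ``$M_q\cong N_q$'' to ``$\mathbf{Q}_q\otimes M\cong \mathbf{Q}_q\otimes N$.''

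Second, I would propagate a rational isomorphism from a single prime dividing $|G|$ to all of $\Q$. Fixing one $p\mid|G|$, the hypothesis $M_p\cong N_p$ gives, after tensoring with $\mathbf{Q}_p$ over $\mathbf{Z}_p$, an isomorphism $\mathbf{Q}_p\otimes M\cong \mathbf{Q}_p\otimes N$ of $\mathbf{Q}_pG$-modules. Since $\Q G$ is semisimple, the Noether--Deuring theorem (see \cite{CR81}) lets me descend this isomorphism along the field extension $\Q\subseteq\mathbf{Q}_p$, yielding $\Q\otimes M\cong \Q\otimes N$ as $\Q G$-modules (equivalently, $M$ and $N$ afford the same rational character). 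Tensoring this global isomorphism up to $\mathbf{Q}_q$ gives $\mathbf{Q}_q\otimes M\cong \mathbf{Q}_q\otimes N$ for every prime $q$, in particular for $q\nmid|G|$. Combined with the first input this forces $M_q\cong N_q$ at the primes $q\nmid|G|$, completing the reverse implication.

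The main obstacle is the rigidity statement of the second paragraph: that over the maximal order $\mathbf{Z}_qG$ the isomorphism type of a lattice is detected by its rational span. Everything else (Maschke, extension of scalars, Noether--Deuring) is formal, and the real input is the structure theory of maximal orders over a complete discrete valuation ring, which forces each lattice to be a direct sum of the unique lattices attached to the simple summands of $\mathbf{Q}_qG$, so that no genus phenomena can survive away from the primes dividing $|G|$.
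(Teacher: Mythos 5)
The paper does not actually prove this proposition --- it is imported verbatim from Curtis--Reiner \cite{CR81} as a citation --- so there is no internal argument to compare against; your proof is correct and is essentially the standard proof of the cited result: for $q\nmid|G|$ the ring $\mathbf{Z}_qG$ is a maximal order over a complete discrete valuation ring, so its lattices are determined by their rational spans, and the required isomorphism $\mathbf{Q}_q\otimes M\cong\mathbf{Q}_q\otimes N$ is transported from a single prime $p\mid|G|$ via Noether--Deuring. The only caveat is the one you already flag: when $G$ is trivial the right-hand condition is vacuous and the statement as literally written fails for lattices of different ranks, so the proposition tacitly assumes $|G|>1$ (or that $\Q\otimes M\cong\Q\otimes N$, which is built into the notion of genus in the source); in the context of this paper $|G|=p$ is prime, so this never arises.
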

		
		\subsection{Galois groups acting on ideal class groups}
		
		The Galois group $\mathcal{G}(p)$ naturally acts on $\Q(\zeta)$ (respectively $\Z[\zeta]$) via automorphisms. In particular, $\mathcal{G}(p)$ acts on the set of ideals of $\Z[\zeta]$.
		
		\begin{lem}[\cite{Cha86}, Chapter IV, Exercise 6.2]\label{Exercise 6.2}
			Let $A$ and $B$ be ideals of $\Z[\zeta]$. If $A$ and $B$ are in the same ideal class, then $\sigma(A)$ and $\sigma(B)$ are in the same ideal class for any $\sigma \in \mathcal{G}(p)$.
		\end{lem}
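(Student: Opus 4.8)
The plan is to unwind the phrase ``in the same ideal class'' into an elementwise statement inside $\Q(\zeta)$, and then exploit the single structural fact we need about $\sigma$: that it is a ring automorphism of $\Z[\zeta]$ fixing $\Q$, hence commutes with ideal multiplication and carries principal ideals to principal ideals.

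First I would recall that, since $\Z[\zeta]$ is the ring of integers of $\Q(\zeta)$ and therefore a Dedekind domain with fraction field $\Q(\zeta)$, two nonzero ideals $A$ and $B$ lie in the same ideal class precisely when there exists $\gamma \in \Q(\zeta)^{*}$ with $B = \gamma A$ as fractional ideals. Writing $\gamma = a/b$ with $a,b \in \Z[\zeta]\setminus\{0\}$, this is equivalent to the equality of integral ideals $bB = aA$, where $aA := \{a x : x \in A\}$ and $bB$ is defined analogously. This reformulation is the whole content of the hypothesis and costs nothing beyond the standard description of the class group.

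Next I would simply apply $\sigma$ to the equation $bB = aA$. Because $\sigma$ is a bijective ring homomorphism, it is additive and multiplicative, so $\sigma(aA) = \sigma(a)\,\sigma(A)$ and $\sigma(bB) = \sigma(b)\,\sigma(B)$; hence $\sigma(b)\,\sigma(B) = \sigma(a)\,\sigma(A)$, that is $\sigma(B) = \frac{\sigma(a)}{\sigma(b)}\,\sigma(A)$. As $\sigma$ is injective, $\sigma(a)$ and $\sigma(b)$ are again nonzero elements of $\Z[\zeta]$, so $\sigma(a)/\sigma(b) \in \Q(\zeta)^{*}$ witnesses that $\sigma(A)$ and $\sigma(B)$ lie in the same ideal class, as required.

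The closest thing to an obstacle is the verification that $\sigma$ respects scalar multiplication of ideals, i.e. that $\sigma(\{a x : x \in A\}) = \{\sigma(a)\,y : y \in \sigma(A)\}$; but this is immediate from $\sigma$ being a bijective ring homomorphism and requires no Galois-theoretic input beyond $\sigma(\Z[\zeta]) = \Z[\zeta]$. Everything else is a direct translation between the fractional-ideal and elementwise descriptions of the class group, so no genuine difficulty arises and the argument is short.
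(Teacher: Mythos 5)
Your argument is correct. The paper does not actually prove this lemma---it is quoted from Charlap's book (Chapter IV, Exercise 6.2) with no proof supplied---so there is no in-paper argument to compare against; your reduction of ``same ideal class'' to the elementwise equation $bB=aA$, followed by the observation that $\sigma$ restricts to a ring automorphism of $\Z[\zeta]$ and hence commutes with ideal multiplication and sends nonzero elements to nonzero elements, is exactly the standard solution to that exercise and contains no gaps.
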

		
		It follows from Lemma \ref{Exercise 6.2} that $\mathcal{G}(p)$ acts via automorphisms on $H(\Q(\zeta))$. We will denote by $\mathcal{G}(p)\backslash H(\Q(\zeta))$  the set of orbits of $H(\Q(\zeta))$ under this action.
		
		The cyclotomic fields of class number one were characterized by Montgomery, as well as  Uchida in 1971.
		
		\begin{prop}[\cite{Rib01}, p. 652]\label{h_p=1 iff p<19}
			Let $p$ be a prime number. Then $h_p=1$ if and only if $p\leq 19$.
		\end{prop}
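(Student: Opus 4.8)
This is the classical determination of the cyclotomic fields of class number one due to Montgomery and Uchida, and the plan is to reduce both implications to the relative class number. First I would factor $h_p = h_p^{+}h_p^{-}$, where $h_p^{+}$ is the class number of the maximal real subfield $\Q(\zeta)^{+}=\Q(\zeta+\zeta^{-1})$ and $h_p^{-}=h_p/h_p^{+}$ is the relative class number; both are positive integers. Thus $h_p=1$ forces $h_p^{-}=1$, while conversely a single strict inequality $h_p^{-}>1$ already yields $h_p>1$. This turns the harder implication --- that $p>19$ forces $h_p>1$ --- into the purely analytic assertion that $h_p^{-}>1$ for every prime $p\geq 23$.

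For the relative class number I would use the analytic formula
\begin{equation*}
h_p^{-}=2p\prod_{\chi(-1)=-1}\left(-\tfrac{1}{2}B_{1,\chi}\right),
\end{equation*}
where $\chi$ runs over the odd Dirichlet characters modulo $p$ and $B_{1,\chi}=\frac{1}{p}\sum_{a=1}^{p-1}\chi(a)\,a$ is the generalized Bernoulli number (the factor $2p$ is the number of roots of unity in $\Q(\zeta)$). Since $L(0,\chi)=-B_{1,\chi}$, each factor equals $\tfrac12 L(0,\chi)$, and the functional equation rewrites $L(0,\chi)$ in terms of $L(1,\bar\chi)$ together with an archimedean factor of order $\sqrt{p}$. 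Feeding lower bounds for $|L(1,\chi)|$ into the product over all $(p-1)/2$ odd characters then shows that $h_p^{-}$ grows faster than any fixed power of $p$ --- roughly on the order of $\bigl(p/4\pi^2\bigr)^{(p-1)/4}$ --- and, crucially, that $h_p^{-}>1$ for every $p$ exceeding some explicit constant $N$.

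The finitely many primes with $23\leq p<N$ are then handled by directly evaluating $h_p^{-}$ from the Bernoulli numbers $B_{1,\chi}$, each being a finite sum, and checking $h_p^{-}>1$ in each case. For the remaining implication, $p\leq 19\Rightarrow h_p=1$, the same formula gives $h_p^{-}=1$ for each $p\in\{3,5,7,11,13,17,19\}$ by a finite computation, while $h_p^{+}=1$ follows from a Minkowski-bound argument inside the real subfield $\Q(\zeta)^{+}$: its degree and discriminant are small enough that every ideal class contains an ideal of norm below the Minkowski bound, and one verifies that all such ideals are principal.

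The hard part will be the effective lower bound of the second paragraph. The mere growth of $h_p^{-}$ follows softly from the Brauer--Siegel theorem, but that result is ineffective, so it cannot by itself cut the problem down to a finite check; the substance of the Montgomery--Uchida argument is precisely to replace it by explicit estimates on the generalized Bernoulli numbers (equivalently on $L(1,\chi)$), producing a usable threshold $N$ beyond which $h_p^{-}>1$ is guaranteed.
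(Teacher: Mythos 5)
The paper does not prove this proposition at all: it is quoted verbatim from Ribenboim (p.~652) as the Montgomery--Uchida determination of the cyclotomic fields of class number one, so there is no internal argument to compare yours against. What you have written is a faithful outline of how that cited theorem is actually proved: the factorization $h_p=h_p^{+}h_p^{-}$ with $h_p^{-}$ a positive integer, the analytic formula $h_p^{-}=2p\prod_{\chi(-1)=-1}\bigl(-\tfrac{1}{2}B_{1,\chi}\bigr)$, the passage through the functional equation to lower bounds on $L(1,\chi)$, and the reduction to a finite check plus the Minkowski-bound verification of $h_p^{+}=1$ for $p\leq 19$ are all the right ingredients, and your asymptotic $\bigl(p/4\pi^{2}\bigr)^{(p-1)/4}$ for $h_p^{-}$ is the correct order of growth.

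That said, be aware that your text is a strategy, not a proof: the two places where all the work lives are left as declarations. First, the effective lower bound on $\prod_{\chi\ \mathrm{odd}}|L(1,\chi)|$ is not automatic --- when $p\equiv 3\pmod 4$ the quadratic character is odd and real, so one factor is threatened by the Siegel-zero phenomenon; the standard escape is that this single factor is still $\gg p^{-1/2}$ unconditionally (from $h\geq 1$ for the quadratic subfield), which is swamped by the $(\sqrt{p}/2\pi)^{(p-1)/2}$ growth of the remaining product, but you should say this explicitly rather than gesture at ``lower bounds for $|L(1,\chi)|$''. Second, the verifications $h_p^{-}=1$ and $h_p^{+}=1$ for $p\in\{3,5,7,11,13,17,19\}$, and $h_p^{-}>1$ for the finitely many $p$ below your threshold $N$, are genuine computations (for $p=19$ the real subfield has degree $9$ and Minkowski bound over $100$). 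Since the paper itself treats the statement as a black box with a reference, reproducing the full Montgomery--Uchida argument is more than is asked of you here; but as a standalone proof your proposal is an accurate skeleton with the quantitative core still to be filled in.
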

		
		\begin{defn}
			Let $M$ and $N$ be $\Z G$-modules. A semi-linear homomorphism from $M$ to $N$  is a pair $(f,\varphi)$  where $f:M\rightarrow N$ is an abelian group homomorphism and $\varphi$ is  an automorphism of $G$ such that $$f(x\cdotp m)=\varphi(x)\cdotp f(m)$$ for $x\in G$ and $m\in M$.
		\end{defn}
		
		Let $[A]$ denote the ideal class of an ideal $A$ of $\Z[\zeta]$ and let $M(a,b,c;[A])$ be the unique (up to isomorphism) $\Z G$-lattice with invariants $a, \ b, \ c$ and $[A]$ (see \cite[Theorem 74.3]{CR62}).
		
		\begin{prop}[\cite{Cha86}, Chapter IV, Theorem 6.2]\label{Cha-th 6.2}
			Let $M=M(a,b,c;[A])$ and $M'=M(a',b',c';[A'])$ be $\Z G$-lattices. Then $M$ will be semi-linearly isomorphic to $M'$ if and only if $a=a', \ b=b', c=c'$ and $\sigma\cdotp [A]=[A']$ for some $\sigma\in \mathcal{G}(p)$.
		\end{prop}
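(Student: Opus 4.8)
The plan is to translate a semi-linear isomorphism into an ordinary $\Z G$-isomorphism after twisting the $G$-action by an automorphism of $G$, and then to read off the effect of such a twist on the classifying invariants $a,b,c,[A]$. For $\varphi\in\mathrm{Aut}(G)$ and a $\Z G$-lattice $N$, write $N^{\varphi}$ for the lattice with the same underlying abelian group as $N$ but with the action $x\ast n:=\varphi(x)\cdot n$. A direct check shows that a pair $(f,\varphi)$ is a semi-linear isomorphism $M\to M'$ precisely when $f\colon M\to (M')^{\varphi}$ is an isomorphism of $\Z G$-modules, since the defining identity $f(x\cdot m)=\varphi(x)\cdot f(m)$ is exactly the statement that $f$ is equivariant for the twisted action on the target. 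As $G$ is cyclic of order $p$, we have the canonical identifications $\mathrm{Aut}(G)\cong(\Z/p)^{\times}\cong\mathcal{G}(p)$; I will write $\varphi_k$ for the automorphism $x\mapsto x^{k}$ and $\sigma_k$ for the Galois automorphism $\zeta\mapsto\zeta^{k}$, so that $\varphi_k\leftrightarrow\sigma_k$.

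First I would show that twisting fixes the invariants $a,b,c$. Passing to the pro-$p$ component and using Remark \ref{Remark-2}, the lattice $M'=M(a',b',c';[A'])$ becomes $\mathbf{Z}_p^{a'}\oplus\mathbf{Z}_p[\zeta]^{b'}\oplus(\mathbf{Z}_pG)^{c'}$, and by Proposition \ref{Heller-Reiner} together with the Krull--Schmidt--Azumaya theorem (Proposition \ref{KSA}) the numbers $a',b',c'$ are the multiplicities of the three pairwise non-isomorphic indecomposables $\mathbf{Z}_p,\mathbf{Z}_p[\zeta],\mathbf{Z}_pG$, already distinguished by their $\Z$-ranks $1,p-1,p$. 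Twisting sends each of these local indecomposables to one of the same type ($\mathbf{Z}_p$ to the trivial module, and the regular module $\mathbf{Z}_pG$ and the rank-$(p-1)$ module to modules of the same rank, hence of the same type), so the multiplicities are unchanged; thus $(M')^{\varphi}$ again has invariants $a',b',c'$.

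The heart of the argument is the effect of the twist on the ideal class. For an ideal $A$ of $\Z[\zeta]$ with the standard action $x\cdot a=\zeta a$, the twisted action is $x\ast a=\zeta^{k}a$, and the restriction of $\sigma_{k}^{-1}=\sigma_{k^{-1}}$ gives a $\Z G$-isomorphism $A^{\varphi_k}\cong\sigma_{k^{-1}}(A)$, because $\sigma_{k^{-1}}(\zeta^{k}a)=\zeta\,\sigma_{k^{-1}}(a)$; by Lemma \ref{Exercise 6.2} this is well defined on classes and yields $[A^{\varphi_k}]=\sigma_{k}^{-1}[A]$. An entirely analogous computation with the action $x\cdot(a,r)=(\zeta a+ra_0,r)$ shows that the rank-$p$ summands $(A_i,a_i)$ twist to summands of the same type whose ideal class is again multiplied by $\sigma_{k}^{-1}$. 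Since the classifying class $[A]$ of a lattice in \eqref{sum 1} is the product over all summands of their individual ideal classes and $\sigma_{k}^{-1}$ acts as a homomorphism on $H(\Q(\zeta))$, I conclude that $(M')^{\varphi_k}=M(a',b',c';\sigma_{k}^{-1}[A'])$.

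Finally I would invoke the uniqueness part of the Diederichsen--Reiner classification (Proposition \ref{Diederichsen-Reiner}): $M=M(a,b,c;[A])$ is $\Z G$-isomorphic to $(M')^{\varphi_k}=M(a',b',c';\sigma_k^{-1}[A'])$ if and only if $a=a'$, $b=b'$, $c=c'$ and $[A]=\sigma_k^{-1}[A']$, that is, $\sigma_k[A]=[A']$. Combining this with the reformulation of the first paragraph, a semi-linear isomorphism $M\to M'$ exists if and only if $a=a',\,b=b',\,c=c'$ and $\sigma[A]=[A']$ for some $\sigma\in\mathcal{G}(p)$, as claimed. I expect the only delicate point to be the twist computation on the rank-$p$ summands $(A_i,a_i)$, where one must check that the extension datum $a_i$ transforms compatibly so that the result is genuinely of the form $(\sigma_{k^{-1}}(A_i),a_i')$ with the asserted ideal class; the rank-$(p-1)$ case and the bookkeeping of $\sigma_k^{-1}$ versus $\sigma_k$ are routine once the correspondence $\varphi_k\leftrightarrow\sigma_k$ has been fixed.
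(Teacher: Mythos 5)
The paper does not prove this proposition at all: it is imported directly from Charlap \cite{Cha86} (Chapter IV, Theorem 6.2), so there is no internal proof to compare yours against. Judged on its own terms, your reconstruction is correct and is essentially the standard argument: identifying a semi-linear isomorphism $(f,\varphi)$ with a $\Z G$-isomorphism $M\to (M')^{\varphi}$, noting that twisting is an equivalence of categories preserving $\Z$-ranks so that the multiplicities $a,b,c$ of the three types of indecomposables (ranks $1$, $p-1$, $p$) are unchanged, computing that the twist by $\varphi_k$ multiplies the ideal-class invariant by $\sigma_k^{-1}$, and concluding with the uniqueness part of the classification in Proposition \ref{Diederichsen-Reiner} (where the invariant $[A]$ is the class of the product of all ideals occurring). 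The point you flag as delicate does go through: under $\varphi_k$ the action on $(A_i,a_i)$ becomes $x\ast(a,r)=\bigl(\zeta^k a+r\frac{\zeta^k-1}{\zeta-1}a_i,\,r\bigr)$, and since $\frac{\zeta^k-1}{\zeta-1}$ is a unit of $\Z[\zeta]$ when $p\nmid k$, the transported extension datum $\sigma_{k^{-1}}\bigl(\frac{\zeta^k-1}{\zeta-1}a_i\bigr)$ still lies outside $(\zeta-1)\sigma_{k^{-1}}(A_i)$, so the twisted summand is again of type (c) with ideal class $\sigma_k^{-1}[A_i]$. One small caveat: for $p=2$ your rank argument does not separate $\mathbf{Z}_2$ from $\mathbf{Z}_2[\zeta]$, but there $\mathrm{Aut}(G)$ is trivial, so no twisting occurs and the claim reduces to the linear classification; and since the statement quantifies over all $\sigma\in\mathcal{G}(p)$, the inverse in $\sigma_k^{-1}$ is harmless.
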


		\subsection{Bieberbach groups with prime order holonomy}\label{section-Bieberbach groups}

		This subsection contains the classification of Bieberbach groups whose holonomy group has prime order. This classification is due to Charlap \cite{Cha65}.
		
		\begin{prop}[\cite{Cha86}, Chapter I, Proposition 4.1]\label{Cha, Prop 4.1}
			Let $\Gamma$ be a $n$-dimensional Bieberbach group. Then the translation subgroup $M$ is the unique normal, maximal abelian subgroup of $\Gamma$.
		\end{prop}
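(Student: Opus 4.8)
The plan is to invoke Bieberbach's first theorem to realize $\Gamma$ as a crystallographic group, i.e. a discrete cocompact subgroup of the isometry group $\R^n\rtimes O(n)$, in which the translation subgroup is $M=\Gamma\cap\R^n$; this is a normal free abelian subgroup of rank $n$ whose image spans $\R^n$. Writing elements as pairs $(v,R)$ with $v\in\R^n$, $R\in O(n)$ and multiplication $(v,R)(w,S)=(v+Rw,RS)$, I would reduce the proposition to two claims: first, that $M$ is itself a normal maximal abelian subgroup, and second, that every normal abelian subgroup of $\Gamma$ is contained in $M$. Granting these, if $A$ is any normal maximal abelian subgroup then $A\subseteq M$ by the second claim, and since $M$ is abelian and $A$ is maximal abelian this forces $A=M$, which is exactly the asserted uniqueness.

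For the first claim, normality of $M$ is part of the crystallographic description (the translations form a normal subgroup of $\R^n\rtimes O(n)$), so only maximality among abelian subgroups needs proof. I would compute the centralizer $C_\Gamma(M)$: an element $(v,R)$ commutes with every translation $(m,I)$, $m\in M$, precisely when $Rm=m$ for all $m\in M$, and since $M$ spans $\R^n$ this means $R=I$, whence $C_\Gamma(M)=M$. Consequently any abelian subgroup containing $M$ lies in $C_\Gamma(M)=M$, so $M$ is maximal abelian.

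The heart of the argument is the second claim. Let $A$ be a normal abelian subgroup and take an arbitrary $a=(v,R)\in A$. Conjugating by a translation $(m,I)\in M\subseteq\Gamma$ and using normality of $A$ gives $(m,I)(v,R)(m,I)^{-1}=(v+(I-R)m,R)\in A$. Since $A$ is abelian, this conjugate commutes with $a$; performing the multiplication and comparing translational parts yields $R(I-R)m=(I-R)m$, that is $(I-R)^2m=0$, for every $m\in M$. As $M$ spans $\R^n$ we obtain $(I-R)^2=0$ on all of $\R^n$. Now $R\in O(n)$ is orthogonal, hence semisimple, so $I-R$ can be nilpotent only if it vanishes; thus $R=I$ and $a\in M$. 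This proves $A\subseteq M$ and completes the argument.

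The main obstacle is precisely this last computation: extracting the relation $(I-R)^2=0$ from the commuting condition and then invoking the semisimplicity of orthogonal matrices to conclude $R=I$. Everything else is a routine translation between the abstract hypotheses and the geometric model furnished by Bieberbach's theorem.
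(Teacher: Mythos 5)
Your argument is correct and complete: the computation $(I-R)^2m=0$ for all $m$ in the spanning lattice, combined with semisimplicity of orthogonal matrices, is exactly the right way to show a normal abelian subgroup has trivial rotational part, and the centralizer computation $C_\Gamma(M)=M$ correctly handles maximality. The paper gives no proof of its own (it cites Charlap, Chapter I, Proposition 4.1), and your argument is essentially the standard one found there, so there is nothing to fault.
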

		
		Using the exact sequence 
		\begin{equation}\label{exact sequence}
		1\rightarrow M\rightarrow \pi_{1}(X)\rightarrow G\rightarrow 1,
		\end{equation}  one sees that $M\cong \Z^n$ has a natural structure of $\Z G$-module. This defines the representation $\rho: G\rightarrow \mathrm{GL}(n,\Z)$ of the holonomy group $G$. Since $M$ is the maximal abelian subgroup, it follows that $\rho$ is faithful (i.e., $\rho$ is injective). It is well known that a representation $\rho$ induces a structure of $\Z G$-module on $M$.
		
		\begin{lem}\label{characterization-Bieberbach group with n(G)=1}
			Let $\Gamma$ be a $n$-dimensional Bieberbach group with holonomy group $G$ and  $M$ its maximal abelian normal subgroup. Then $M=M_{n-1}\oplus \Z$ admits a $\Z G$-decomposition, where $\Z$ is trivial module generated by $p$-th power of some element $c$ of $\Gamma$ and $\Gamma=M_{n-1}\rtimes C$ with $C=\langle c\rangle$.
		\end{lem}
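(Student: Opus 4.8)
The plan is to analyse the extension \eqref{exact sequence} through the $p$-th powers of lifts of a generator of $G$. Fix a lift $c_{0}\in\Gamma$ of $x$; since $x^{p}=1$ we have $c_{0}^{p}\in M$, and because $c_{0}c_{0}^{p}c_{0}^{-1}=c_{0}^{p}$ the element $c_{0}^{p}$ lies in the fixed submodule $M^{G}=\{m\in M: x\cdot m=m\}$. More generally, for a lift $c=c_{0}^{j}m$ of $x^{j}$ (with $j$ prime to $p$ and $m\in M$) a direct computation gives $c^{p}=j\,c_{0}^{p}+\mathrm{N}m$, where $\mathrm{N}=1+x+\cdots+x^{p-1}$; thus as $c$ runs over the lifts of all generators of $G$, the element $c^{p}$ runs over $\bigcup_{j=1}^{p-1}\bigl(j\,c_{0}^{p}+\mathrm{N}M\bigr)$. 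The whole argument consists in choosing $j$ and $m$ so that $c^{p}$ becomes a primitive vector spanning a trivial $\Z G$-direct summand of $M$.

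First I would evaluate $M^{G}$ and $\mathrm{N}M$ on each indecomposable type of \eqref{sum 1}. On a trivial summand $\Z$ one has $M^{G}=\Z$ and $\mathrm{N}=p$; on a summand $A_{i}$ the relation $\zeta a=a$ forces $a=0$, so $A_{i}^{G}=0=\mathrm{N}A_{i}$; and on $(A_{i},a_{i})$ one checks that the fixed submodule is infinite cyclic and that $\mathrm{N}$ maps $(A_{i},a_{i})$ \emph{onto} it. Consequently $M^{G}/\mathrm{N}M\cong(\Z/p)^{a}$, with $a$ the number of trivial summands, and the map $M^{G}\to M^{G}/\mathrm{N}M$ annihilates the contribution of every non-trivial summand. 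Writing $c_{0}^{p}=t_{T}+t_{a}$ for the splitting $M=T\oplus\Z^{a}$, where $T$ collects the non-trivial indecomposables, the class of $c_{0}^{p}$ in $M^{G}/\mathrm{N}M$ is $[t_{a}]$; identifying $M^{G}/\mathrm{N}M$ with $H^{2}(G,M)$ by the periodicity of the cohomology of the cyclic group $G$, this is exactly the extension class of \eqref{exact sequence}.

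The torsion-freeness of $\Gamma$ now enters. An element of $\Gamma$ lying over a generator of $G$ has order $p$ precisely when its $p$-th power in $M$ vanishes, so $\Gamma$ torsion-free forces $[t_{a}]\neq0$; in particular $a\ge1$ (a trivial summand exists) and $t_{a}\not\equiv0\pmod p$. Since $\mathrm{N}$ surjects $T$ onto $T^{G}$, I can first pick $m\in T$ to cancel the $T$-component, reducing $c^{p}$ to the trivial part; then, using the freedom in $j$ and in $m\in\Z^{a}$, I claim an element of the form $j\,t_{a}+p\,w$ with $w\in\Z^{a}$ can be made a primitive vector $v$ of $\Z^{a}$. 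For $a\ge2$ this is an elementary coprimality/CRT argument keeping a coordinate of $t_{a}$ prime to $p$; for $a=1$ one takes $j\equiv t_{a}^{-1}\pmod p$, so that $j\,t_{a}\equiv1$, and then adjusts by a multiple of $p$ to reach $v=1$. This is the step I expect to be the main obstacle, since it is exactly where the case $a=1$ would fail if one only rescaled by $p$ and did not also allow $c$ to lie over a different generator $x^{j}$.

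With $c^{p}=v$ a primitive vector of $\Z^{a}$, I extend $v$ to a $\Z$-basis $v,v_{2},\dots,v_{a}$ of $\Z^{a}$; each $\Z v_{k}$ is a trivial $\Z G$-summand, so setting $M_{n-1}:=T\oplus\Z v_{2}\oplus\cdots\oplus\Z v_{a}$ yields a $\Z G$-decomposition $M=M_{n-1}\oplus\Z c^{p}$ whose second factor is a trivial module generated by $c^{p}$. Finally I would verify $\Gamma=M_{n-1}\rtimes C$ with $C=\langle c\rangle$: the subgroup $M_{n-1}$ is normal, being a $G$-invariant submodule of the abelian normal subgroup $M$; the intersection $M_{n-1}\cap C$ is trivial, since $c^{k}\in M_{n-1}\subseteq M$ forces $p\mid k$ and then $c^{k}=(c^{p})^{k/p}\in\Z c^{p}\cap M_{n-1}=0$; and $M_{n-1}C=\Gamma$, because modulo $M_{n-1}$ the image of $c$ has order-$p$ image in $G$ while $\overline{c^{p}}$ generates $M/M_{n-1}\cong\Z$, so $\langle\bar c\rangle$ is all of $\Gamma/M_{n-1}$. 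This gives the decomposition and the semidirect-product description.
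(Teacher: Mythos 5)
Your argument is correct, but it takes a genuinely different route from the paper. The paper's proof is essentially a citation: it invokes \cite{Szc97} (Theorem 2) to produce an exact sequence $1\rightarrow M_{n-1}\rightarrow\Gamma\rightarrow\Z\rightarrow 1$ and then observes that this splits because $\Z$ is free, so $\Gamma=M_{n-1}\rtimes C$. You instead prove everything from first principles: you compute $\widehat{H}^{0}(G,M)=M^{G}/\mathrm{N}M\cong(\Z/p)^{a}$ summand by summand in the decomposition \eqref{sum 1}, identify the class of $c_{0}^{p}$ there with the extension class of \eqref{exact sequence}, use torsion-freeness of $\Gamma$ to see this class is nonzero (which, as a bonus, recovers the constraint $a>0$ appearing in Proposition \ref{C1-Bieberbach}), and then exploit the freedom in the generator $x^{j}$ and in $\mathrm{N}M$ to normalize $c^{p}$ to a primitive vector of the trivial part $\Z^{a}$; the correct handling of the case $a=1$ via the choice of $j$ is exactly the point where a naive argument would break, and you address it. The trade-off: the paper's proof is short but opaque, delegating the real content to Szczepa\'nski's decomposition theorem, whereas yours is self-contained, makes visible why a trivial summand generated by a $p$-th power must exist, and only uses the Diederichsen--Reiner classification already quoted in Subsection \ref{C_p-modules}. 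The only places where your write-up leans on assertions rather than full verification are the surjectivity of $\mathrm{N}$ from $(A_{i},a_{i})$ onto its fixed submodule (immediate from $\widehat{H}^{0}(G,\Z_{p}G)=0$ together with the fact that Tate cohomology is annihilated by $p$) and the standard identification of $M^{G}/\mathrm{N}M$ with $H^{2}(G,M)$ carrying $[c_{0}^{p}]$ to the extension class; both are routine and do not constitute gaps.
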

		\begin{proof}
			By \cite[Theorem 2]{Szc97} we have the following exact sequence
			\begin{equation*}
			1\rightarrow M_{n-1}\rightarrow \Gamma\rightarrow \Z \rightarrow 1.
			\end{equation*} 
			Since $\Z$ is free, this sequence splits as a semidirect product $\Gamma=M_{n-1}\rtimes C$. The lemma is proved.
		\end{proof}
		\begin{defn}
			A Bieberbach group $\Gamma$ with prime order holonomy group $G$ is exceptional if its maximal abelian normal subgroup $M$  is an exceptional $\Z G$-module.
		\end{defn}
		
		\begin{prop}[\cite{Cha86}, Chapter IV, Theorem 6.3]\label{C1-Bieberbach}
			There is a one-to-one correspondence between isomorphism classes of non-exceptional Bieberbach groups whose holonomy group has prime order $p$ and $4$-tuples $(a,b,c; \theta)$ where $a,b,c\in \Z$ are as in \eqref{sum 1} with $a> 0, \ b\geq 0, \ c\geq 0, \ (a,c)\neq (1,0), \ (b,c)\neq (0,0)$ and $\theta \in\mathcal{G}(p)\backslash H(\Q(\zeta))$. 
		\end{prop}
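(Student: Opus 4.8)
The plan is to reduce the classification to the representation-theoretic data of Subsection \ref{C_p-modules} together with a cohomological invariant, and then to read off the stated parameters. First I would attach to every Bieberbach group $\Gamma$ with holonomy $G$ the exact sequence \eqref{exact sequence}; by Proposition \ref{Cha, Prop 4.1} the translation lattice $M$ is characteristic in $\Gamma$, so an abstract isomorphism $\Gamma_1\to\Gamma_2$ restricts to a semi-linear isomorphism $(f,\varphi)\colon M_1\to M_2$ in the sense defined above and, conversely, a semi-linear isomorphism compatible with the extension data lifts to a group isomorphism. Thus the problem splits into a module part and an extension part.

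For the module part I would invoke the Diederichsen--Reiner decomposition (Proposition \ref{Diederichsen-Reiner}), which presents $M$ with the invariants $a,b,c$ of \eqref{sum 1} and an ideal class $[A]=\prod_i[A_i]\prod_j[A_j]\in H(\Q(\zeta))$. Faithfulness of the holonomy representation forces $b+c\ge 1$, i.e.\ $(b,c)\neq(0,0)$; Lemma \ref{characterization-Bieberbach group with n(G)=1} produces a trivial summand, so $a\ge 1$; and non-exceptionality is exactly the condition $(a,c)\neq(1,0)$. By Proposition \ref{Cha-th 6.2} the semi-linear isomorphism class of $M$ is recorded precisely by $(a,b,c)$ together with the Galois orbit $\theta=\mathcal{G}(p)\cdot[A]\in\mathcal{G}(p)\backslash H(\Q(\zeta))$.

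For the extension part I would compute $H^{2}(G,M)$ summand by summand using the norm element $N_G$. A short calculation gives $H^{2}(G,A_j)=0$ and $H^{2}(G,(A_i,a_i))=0$ (for the latter one checks that $(A_i,a_i)^{G}$ and $N_G(A_i,a_i)$ coincide), while $H^{2}(G,\Z)=\Z/p$; hence $H^{2}(G,M)\cong(\Z/p)^{a}$. Since $G$ has prime order, the extension is torsion-free, i.e.\ Bieberbach, if and only if its class is nonzero, and Lemma \ref{characterization-Bieberbach group with n(G)=1} shows the class may be taken to be a generator. It then remains to quotient the nonzero classes by the induced action of the semi-linear automorphism group of $M$ and, conversely, to realize every admissible tuple by an explicit cocycle; this yields the asserted bijection.

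The crux, and the step I expect to be the main obstacle, is showing that in the non-exceptional range the extension class contributes no invariant beyond $\theta$. When $a\ge 2$ this is immediate, since $GL_{a}(\Z)\twoheadrightarrow GL_{a}(\F_p)$ acts transitively on the nonzero vectors of $(\Z/p)^{a}$, so the class may be normalized while leaving $[A]$ fixed, and the only residual freedom is the Galois twist acting on $[A]$. The delicate case is $a=1$ with $c\ge 1$: here one must exhibit enough automorphisms of $M$ arising from the summands $(A_i,a_i)$---using $G$-endomorphisms whose finite cokernel is absorbed by the fixed-point line $(A_i,a_i)^{G}$---to rescale the single class in $\Z/p$ by an arbitrary unit, thereby collapsing the a priori $C_2$-ambiguity and recovering the full orbit $\theta$. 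This is exactly the feature that separates Proposition \ref{C1-Bieberbach} from the exceptional case, in which no such summand is present and the invariant descends only to $C_2\backslash H(\Q(\zeta))$.
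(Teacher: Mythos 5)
First, a point of comparison: the paper does not prove this statement at all --- it is imported verbatim from \cite[Chapter IV, Theorem 6.3]{Cha86} --- so there is no internal proof to measure yours against; what you have written is in effect a reconstruction of Charlap's own argument. As such, the outline is right: reduce via Proposition \ref{Cha, Prop 4.1} to the semi-linear isomorphism class of $M$ (Propositions \ref{Diederichsen-Reiner} and \ref{Cha-th 6.2}) together with the orbit of the extension class under semi-linear automorphisms; compute $H^2(G,M)\cong(\Z/p)^a$ summand by summand; note that torsion-freeness is equivalent to the class being nonzero since $G$ has prime order; and record the constraints $a\geq 1$, $(b,c)\neq(0,0)$, with non-exceptionality being exactly $(a,c)\neq(1,0)$. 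The case $a\geq 2$ is handled correctly by transitivity of $SL_a(\F_p)$ on nonzero vectors.

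At the step you yourself identify as the crux, however --- $a=1$, $c\geq 1$ --- what you write is a statement of what must be shown, not an argument, and the gap is genuine because the required automorphisms do not exist for formal reasons. Write $M=(A,a_0)\oplus\Z\oplus(\text{rest})$ and try to act on $H^2(G,\Z)=\Z/p$ by an arbitrary unit $d$: if you keep the identity on $(A,a_0)$ and use only the maps through $(A,a_0)^G$ (a copy of $\Z$ whose image under the second-coordinate projection is $p\Z$) and the coinvariant projection $(a,r)\mapsto r$, then the determinant condition on the two trivial rational summands forces $d\equiv\pm1\pmod p$ --- no better than the exceptional case. The actual input, which your sketch never names, is arithmetic: $\mathrm{End}_G\bigl((A,a_0)\bigr)$ is the fibre product $\{(\lambda,q)\in\Z[\zeta]\times\Z:\lambda\equiv q\pmod{(\zeta-1)}\}$, and the units of $\Z[\zeta]$ surject onto $(\Z[\zeta]/(\zeta-1))^{\times}\cong\F_p^{\times}$, e.g.\ via the cyclotomic units $(\zeta^k-1)/(\zeta-1)\mapsto k$. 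Taking $\lambda$ a unit with residue $k\equiv d^{-1}$ and $q\equiv k\pmod p$, the block on the trivial summands becomes $\left(\begin{smallmatrix} q & sp\\ t & d\end{smallmatrix}\right)$ and one can solve $qd-spt=1$; this produces the automorphism rescaling the class by $d$ and collapsing the invariant to the full $\mathcal{G}(p)$-orbit. Without this ingredient your argument cannot separate Proposition \ref{C1-Bieberbach} from Proposition \ref{C2-Bieberbach}, which is exactly the dichotomy the statement encodes.
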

		
		Note that $\mathcal{G}(p)$ is cyclic of order $p-1$ if $p>2$, and of order 2 if $p=2$. Hence, $\mathcal{G}(p)$ has a unique subgroup $C_2$ of order $2$. 
		
		\begin{prop}[\cite{Cha86}, Chapter IV, Theorem 6.4]\label{C2-Bieberbach}
			There is a one-to-one correspondence between isomorphism classes of exceptional Bieberbach groups whose holonomy group has prime order $p$ and pairs $(b,\theta)$ where $b>0$ as in Definition \ref{exceptional},  and $\theta \in C_2\backslash H(\Q(\zeta))$. 
		\end{prop}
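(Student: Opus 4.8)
The plan is to realise the isomorphism classes of exceptional Bieberbach groups as equivalence classes of pairs (translation lattice, extension class) and then to carry out an orbit count. By Proposition \ref{Cha, Prop 4.1} the lattice $M$ is characteristic in $\Gamma$, so every group isomorphism $\Gamma_1\to\Gamma_2$ restricts to a semi-linear isomorphism $(f,\varphi)$ of the modules $M_1\to M_2$ (with $\varphi\in\mathrm{Aut}(G)\cong\mathcal{G}(p)$) carrying the extension class of $\Gamma_1$ in $H^2(G,M_1)$ to that of $\Gamma_2$; conversely any such compatible pair lifts to a group isomorphism. Hence I would identify the isomorphism classes of exceptional Bieberbach groups with the equivalence classes of pairs $(M,\alpha)$, where $M$ is exceptional and $\alpha\in H^2(G,M)$ is a \emph{special} (torsion-free) class, taken modulo semi-linear isomorphisms.

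First I would compute the cohomology. For the exceptional lattice $M=\bigoplus_{j=1}^{b}A_j\oplus\Z$ one has, by periodicity of cyclic cohomology, $H^{2}(G,A_j)\cong\widehat{H}^{0}(G,A_j)=A_j^{G}/NA_j=0$ (both $A_j^{G}=0$ and $NA_j=0$, since $N=1+\zeta+\cdots+\zeta^{p-1}=0$ on $\Z[\zeta]$), while $H^{2}(G,\Z)\cong\Z/p$. Thus $H^{2}(G,M)\cong\Z/p$. To single out the special classes I would use that a lift $\tilde{x}$ of the generator $x$ satisfies $\tilde{x}^{p}=m_0\in M^{G}$ representing $\alpha$, and that any element over $x^{k}$ has $(m\tilde{x}^{k})^{p}=Nm+km_0$; torsion-freeness holds iff $km_0\notin NM$ for all $1\le k\le p-1$, i.e. (since $H^{2}(G,M)$ is an $\F_p$-space) iff $\alpha\ne 0$. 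So the special classes are exactly the $p-1$ nonzero elements, and faithfulness of the holonomy forces $b>0$.

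The decisive computation is the automorphism action. Fixing the $\Z G$-isomorphism type $M(1,b,0;[A])$ (with Steinitz class $[A]=[A_1\cdots A_b]$, cf. Proposition \ref{Diederichsen-Reiner} and the invariants in Proposition \ref{Cha-th 6.2}), the fact that there is a \emph{single} trivial summand ($a=1$) gives $\mathrm{Hom}_{\Z G}(\Z,A_j)=A_j^{G}=0$ and $\mathrm{Hom}_{\Z G}(A_j,\Z)=0$ (the coinvariants $A_j/(\zeta-1)A_j$ are finite), so that $\mathrm{Aut}_{\Z G}(M)=\mathrm{Aut}_{\Z G}\!\big(\bigoplus_j A_j\big)\times\{\pm 1\}$ acts on $H^{2}(G,M)=H^{2}(G,\Z)=\Z/p$ only through the $\{\pm 1\}$ factor. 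This is precisely the feature separating the exceptional case from the non-exceptional one of Proposition \ref{C1-Bieberbach} (where $a\ge 2$ yields $\mathrm{GL}_{\ge 2}(\F_p)$ acting transitively on nonzero classes, or $c\ge 1$ supplies further automorphisms). Next I would bring in the semi-linear twists: a twist by $\sigma_t\in\mathcal{G}(p)$ with $\sigma_t(\zeta)=\zeta^{t}$ moves the ideal class by $[A]\mapsto\sigma_t[A]$ (Lemma \ref{Exercise 6.2} and Proposition \ref{Cha-th 6.2}) and simultaneously multiplies $\alpha\in H^{2}(G,\Z)=\mathrm{Hom}(G,\Q/\Z)$ by $t$. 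The resulting group acting on pairs $([A],\alpha)\in H(\Q(\zeta))\times(\Z/p)^{*}$ is $(\Z/p)^{*}\times C_2$, via $g_t\tau^{\epsilon}\cdot([A],\alpha)=(\sigma_t[A],(-1)^{\epsilon}t\alpha)$. Since the $g_t$ already act transitively on the $\alpha$-coordinate, normalising $\alpha=1$ leaves the stabiliser $\{g_1,\,g_{-1}\tau\}\cong C_2$, and $g_{-1}\tau$ acts on $[A]$ by $\sigma_{-1}$, i.e. by complex conjugation, the generator of $C_2\leqslant\mathcal{G}(p)$. Hence for each fixed $b>0$ the orbits are exactly $C_2\backslash H(\Q(\zeta))$, which yields the claimed bijection with pairs $(b,\theta)$, $\theta\in C_2\backslash H(\Q(\zeta))$.

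I expect the main obstacle to be the coupling in the last step: showing that one and the same semi-linear twist $\sigma_t$ acts on the ideal class by $\sigma_t$ and on $H^{2}(G,\Z)$ by the scalar $t$, and therefore that after normalising the extension class the residual symmetry of the Steinitz class is precisely the order-two subgroup generated by inversion rather than all of $\mathcal{G}(p)$. This is exactly the phenomenon that will later distinguish case (i) from case (ii) of Theorem \ref{main Theorem}, and it must be handled carefully, including the degenerate prime $p=2$, where $h_p=1$ makes the count trivially equal to $1$.
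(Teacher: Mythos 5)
This proposition is imported verbatim from Charlap [Cha86, Chapter IV, Theorem 6.4]; the paper supplies no proof of its own, so there is nothing internal to compare with, and your argument is a correct reconstruction of the standard proof from that source. In particular you correctly isolate the decisive point: since $a=1$ and $c=0$, the $\Z G$-linear automorphisms of $M$ act on $H^2(G,M)\cong H^2(G,\Z)\cong\Z/p$ only through $\pm 1$, so after using the semi-linear twists by $\sigma_t\in\mathcal{G}(p)$ (which move the Steinitz class by $\sigma_t$ and the extension class by $t$) to normalise the extension class, the residual stabiliser acting on $H(\Q(\zeta))$ is exactly $\langle\sigma_{-1}\rangle=C_2$, yielding $C_2\backslash H(\Q(\zeta))$ rather than $\mathcal{G}(p)\backslash H(\Q(\zeta))$.
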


\begin{remark}\label{charlap remark} By \cite[Remark p. 151]{Cha86} the isomorphism class of a Bieberbach group $\Gamma$ determines the structure of  $\Z G$-module $M$ in not exceptional case and  determines it up to a twist by inversion of $G$ in the exceptional case. 

\end{remark}	
	
		We shall need the following

		\begin{prop}[\cite{Hil85}, Theorem 1.5]\label{crystallographic restriction} Let $\Gamma$ be a Bieberbach group with holonomy group of order $p$. Then the rank of maximal free abelian normal subgroup $n$ of $\Gamma$ is at least $p-1$. 
		\end{prop}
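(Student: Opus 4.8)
The plan is to exploit that the holonomy group $G$ acts faithfully on the translation lattice $M\cong\Z^n$, combined with the explicit list of indecomposable $\Z G$-lattices recorded above. Since $\Gamma$ is a Bieberbach group and $M$ is its maximal abelian normal subgroup, the induced representation $\rho\colon G\to\mathrm{GL}(n,\Z)$ is faithful; as $G$ has prime order $p$, faithfulness is equivalent to $\rho$ being nontrivial, i.e. the generator $x$ acts on $M$ by an automorphism of order exactly $p$.

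First I would invoke Proposition \ref{Diederichsen-Reiner} to write
\begin{equation*}
M=\bigoplus_{i=1}^{c}(A_i,a_i)\oplus\bigoplus_{j=1}^{b}A_{j}\oplus\Z^{a}
\end{equation*}
as in \eqref{sum 1}, and then compute the $\Z$-rank of each indecomposable summand. Since $\zeta$ is a primitive $p$-th root of unity, $[\Q(\zeta):\Q]=p-1$, so $\Z[\zeta]$ and every nonzero ideal $A_i$ are free of $\Z$-rank $p-1$; consequently each summand $A_j$ has rank $p-1$ and each summand $(A_i,a_i)=A_i\oplus\Z$ has rank $p$, while each trivial summand $\Z$ has rank $1$. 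Reading off the total, $n=cp+b(p-1)+a$.

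The key step is that the $G$-action on the trivial part $\Z^a$ is, by definition, trivial, so faithfulness of $\rho$ cannot come from it: there must be at least one nontrivial indecomposable summand, i.e. $b+c\geq 1$. If $b\geq 1$ then $n\geq b(p-1)\geq p-1$, and if $c\geq 1$ then $n\geq cp\geq p>p-1$; in either case $n\geq p-1$, which is the desired bound.

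I expect the only delicate point to be justifying that faithfulness forces a nontrivial summand, but this is immediate here: were $b=c=0$, then $M=\Z^a$ would be a trivial $\Z G$-module, so $\rho$ would be trivial, contradicting the faithfulness that follows from the maximality of $M$. Everything else is the rank bookkeeping above, which rests only on $[\Q(\zeta):\Q]=p-1$.
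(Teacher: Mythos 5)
Your argument is correct. The paper itself gives no proof of this proposition --- it is imported verbatim from Hiller's paper as Theorem~1.5 there, where it appears as a special case of the general crystallographic restriction (bounding the order of elements of the holonomy group of an $n$-dimensional crystallographic group). What you have done is reconstruct a self-contained proof of the prime-order case from machinery the paper already sets up: the faithfulness of $\rho\colon G\to\mathrm{GL}(n,\Z)$ coming from the maximality of $M$ (stated explicitly in Subsection~2.3), the Diederichsen--Reiner decomposition of Proposition~\ref{Diederichsen-Reiner}, and the observation that every nontrivial indecomposable $\Z G$-lattice has $\Z$-rank $p-1$ or $p$ because $[\Q(\zeta):\Q]=p-1$. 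The rank count $n=cp+b(p-1)+a$ with $b+c\geq 1$ is exactly right, and the reduction of faithfulness to nontriviality via the simplicity of $G$ is the one point that needs saying and you say it. This is essentially the standard proof of the crystallographic restriction for prime $p$ (Hiller's own argument in the relevant case runs through the same representation-theoretic rank bound), so while it is not ``the paper's proof,'' it is a legitimate and economical substitute for the citation, with the added benefit of using only results already quoted in Section~2.
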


		\section{Profinite genus of Bieberbach groups with prime order holonomy}\label{genus}

		\begin{lem}\label{unique open}
			Let $\Gamma$ be a $n$-dimensional Bieberbach group with translation subgroup $M$. Then $\widehat{M}$ is the unique open normal, torsion-free maximal abelian subgroup of $\widehat{\Gamma}$.
		\end{lem}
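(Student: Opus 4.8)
The plan is to establish three properties of $\widehat{M}$ inside $\widehat{\Gamma}$---that it is open, normal, torsion-free and maximal abelian---and then to prove uniqueness by exploiting the finite-index structure coming from the exact sequence \eqref{exact sequence}. First I would note that since $M$ has finite index $p$ in $\Gamma$ and $\Gamma$ is residually finite (being a finitely generated linear group), the profinite completion functor applied to \eqref{exact sequence} yields an exact sequence $1\to\widehat{M}\to\widehat{\Gamma}\to G\to 1$, so that $\widehat{M}$ is an open normal subgroup of $\widehat{\Gamma}$ of index $p$. That $\widehat{M}\cong\widehat{\Z^n}\cong\widehat{\Z}^{\,n}$ is abelian is immediate, and it is torsion-free because $\widehat{\Z}^{\,n}$ has no torsion. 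This handles existence; the substance is uniqueness.

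For uniqueness, suppose $N$ is another open normal torsion-free maximal abelian subgroup of $\widehat{\Gamma}$. The key idea is to transfer the characterization in Proposition \ref{Cha, Prop 4.1}---that $M$ is the \emph{unique} normal maximal abelian subgroup of $\Gamma$---to the profinite setting. I would argue that an open subgroup $N$ has finite index in $\widehat{\Gamma}$, and since $\widehat{M}$ also has finite index, the intersection $\widehat{M}\cap N$ has finite index in both. Because $\widehat{M}$ is abelian and $N$ is abelian, $\widehat{M}\cap N$ is central in the subgroup it generates; the plan is to show that the finite quotient structure forces $N\subseteq\widehat{M}$. Concretely, consider the image of $N$ under the projection $\widehat{\Gamma}\to G$: this image is a subgroup of the group $G$ of prime order $p$, hence either trivial or all of $G$. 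If the image is trivial then $N\subseteq\widehat{M}$ and by maximal abelianness $N=\widehat{M}$.

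The hard part will be ruling out the case where $N$ surjects onto $G$. In that situation $N\widehat{M}=\widehat{\Gamma}$, so $N$ would be an abelian subgroup meeting every coset of $\widehat{M}$; picking a preimage of a generator of $G$ in $N$ gives an element $g$ with $g^p\in\widehat{M}\cap N$ acting on $\widehat{M}$ by the holonomy representation. The plan is to derive a contradiction from the faithfulness of the holonomy action together with torsion-freeness: if $N$ is abelian and contains such a $g$, then $g$ must commute with $\widehat{M}\cap N$, forcing the action of $g$ on a finite-index submodule of $\widehat{M}$ to be trivial; since $\widehat{M}\otimes\Q$ is faithful over $\Q G$, triviality on a finite-index sublattice of the $p$-part contradicts faithfulness of $\rho$ (the same argument that makes $\Gamma$ torsion-free rules out a genuine complement). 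This reduces everything to a statement about the $\mathbf{Z}_pG$-module $M_p$: a nontrivial element of $G$ cannot act trivially on any finite-index submodule, which follows from Proposition \ref{Heller-Reiner} since none of the indecomposable $\mathbf{Z}_pG$-lattices $\mathbf{Z}_p[\zeta]$ or $\mathbf{Z}_pG$ carries a trivial $G$-action on a finite-index piece.

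I expect the genuine obstacle to be the second case, where one must carefully combine the pro-$p$ module theory of Propositions \ref{Heller-Reiner} and \ref{HR62-Cor 1.5} with the group-theoretic fact that a torsion-free extension cannot split compatibly with a faithful fixed-point-free-up-to-finite-index action; handling convergence and closure of the subgroups generated in the profinite topology, rather than treating them as abstract groups, is where the care is needed.
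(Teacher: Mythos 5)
Your argument is essentially sound, but it takes a genuinely different route from the paper. The paper's proof is a two-line appeal to the bijection between finite-index normal subgroups of $\Gamma$ and open normal subgroups of $\widehat{\Gamma}$ (\cite[Proposition 3.2.2]{RZ00}), which transports the uniqueness statement of Proposition \ref{Cha, Prop 4.1} directly to $\widehat{\Gamma}$. You instead work inside $\widehat{\Gamma}$: a competing subgroup $N$ either lies in $\widehat{M}$ (and then equals it by maximality of $N$), or contains an element $g$ with nontrivial image $x$ in $G$; since $N$ is abelian and open, $g$ centralizes the finite-index subgroup $N\cap\widehat{M}$ of $\widehat{M}\cong\widehat{\Z}^{\,n}$, which contains some $m\widehat{\Z}^{\,n}$, so $\rho(x)$ fixes $m\widehat{\Z}^{\,n}$ pointwise and hence, by torsion-freeness of $\widehat{\Z}^{\,n}$, is the identity --- contradicting faithfulness of the holonomy representation. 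This is more work but also more self-contained: the paper's proof silently assumes that the subgroup correspondence matches up the property ``torsion-free maximal abelian'' on both sides, and your centralizer computation is exactly what justifies that. Note that the detour through Propositions \ref{Heller-Reiner} and \ref{HR62-Cor 1.5} is unnecessary: faithfulness of $\rho$ together with the observation that a matrix fixing $m\widehat{\Z}^{\,n}$ pointwise fixes $\widehat{\Z}^{\,n}$ pointwise already closes the second case.

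Two loose ends to tidy. First, the lemma is stated for an arbitrary $n$-dimensional Bieberbach group (and is invoked, via Lemma \ref{diagram}, before the reduction to prime-order holonomy), whereas you assume $[\Gamma:M]=p$ and use primality to force the image of $N$ in $G$ to be trivial or all of $G$. The case split is in fact unnecessary: the centralizer argument rules out any single element of $N$ with nontrivial image in $G$, so your proof works verbatim for arbitrary finite holonomy. Second, the statement also asserts that $\widehat{M}$ itself is a maximal abelian subgroup of $\widehat{\Gamma}$, which your ``existence'' paragraph does not verify; the same computation supplies it, since an abelian subgroup containing $\widehat{M}$ cannot contain an element with nontrivial image in $G$.
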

		
		\begin{proof} There  is a one-to-one correspondence between the set of all finite index normal subgroups of  $\Gamma$  and the set of all open normal subgroups of $\widehat{\Gamma}$ (see \cite[Proposition 3.2.2]{RZ00}). Therefore the lemma follows from Proposition \ref{Cha, Prop 4.1}. 
		\end{proof}
		
		From (\ref{exact sequence}) and Lemma \ref{unique open} we obtain the exact sequence
		\begin{equation*}
		1\rightarrow\widehat{M}\rightarrow\widehat{\Gamma}\rightarrow G\rightarrow 1
		\end{equation*}
		for the corresponding profinite completions.
		
		\begin{lem}\label{diagram}
			Let $\Gamma_1$ and $\Gamma_2$ be $n$-dimensional Bieberbach groups with translation subgroups $M_1$ and $M_2$ and  holonomy groups $G_1$ and $G_2$, respectively. If $\varphi:\widehat{\Gamma}_1\rightarrow\widehat{\Gamma}_2$ is an isomorphism, then there are isomorphisms $\phi:\widehat{M}_1\rightarrow \widehat{M}_2$ and $\psi: G_1\rightarrow G_2$ such that the following diagram commutes:
			\begin{equation}\label{diagram 1}
			\begin{CD}
			1 @>{}>> \widehat{M}_1 @>{}>> \widehat{\Gamma}_1 @>{}>> G_1 @>>> 1 \\
			@. @VV{\phi}V @VV{\varphi}V @VV{\psi}V\\
			1 @>{}>> \widehat{M}_2 @>{}>> \widehat{\Gamma}_{2} @>{}>> G_2 @>>> 1.
			\end{CD}
			\end{equation}
		\end{lem}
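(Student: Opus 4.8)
The plan is to deduce everything from the uniqueness assertion of Lemma~\ref{unique open}. The decisive point is that each of the properties singling out $\widehat{M}_i$ inside $\widehat{\Gamma}_i$ --- being open, normal, torsion-free, and maximal abelian --- is preserved under any isomorphism of profinite groups. First I would observe that $\varphi(\widehat{M}_1)$ enjoys all four of these properties as a subgroup of $\widehat{\Gamma}_2$: openness and normality are transported by $\varphi$ since $\varphi$ is an isomorphism of profinite groups (equivalently, $\widehat{M}_1$ has finite index, and finite index is an abstract invariant), while being abelian and torsion-free are abstract group-theoretic properties invariant under isomorphism. Maximality among abelian subgroups is likewise preserved, because $\varphi$ induces an order-isomorphism between the posets of abelian subgroups of $\widehat{\Gamma}_1$ and of $\widehat{\Gamma}_2$, so maximal elements correspond to maximal elements.

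Since $\widehat{M}_2$ is, by Lemma~\ref{unique open}, the \emph{unique} subgroup of $\widehat{\Gamma}_2$ with these four properties, it follows that $\varphi(\widehat{M}_1)=\widehat{M}_2$. I would then set $\phi:=\varphi|_{\widehat{M}_1}\colon \widehat{M}_1\to\widehat{M}_2$, which is an isomorphism because $\varphi$ is injective and carries $\widehat{M}_1$ onto $\widehat{M}_2$; this makes the left-hand square of \eqref{diagram 1} commute by construction. Because $\varphi(\widehat{M}_1)=\widehat{M}_2$, the map $\varphi$ descends to an isomorphism of quotients $\psi\colon \widehat{\Gamma}_1/\widehat{M}_1\to\widehat{\Gamma}_2/\widehat{M}_2$, and under the identifications $\widehat{\Gamma}_i/\widehat{M}_i\cong G_i$ furnished by the exact sequence displayed just after Lemma~\ref{unique open}, this yields $\psi\colon G_1\to G_2$. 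Commutativity of the right-hand square is precisely the statement that $\psi$ is the map induced by $\varphi$ on cokernels, so the whole diagram \eqref{diagram 1} commutes.

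The argument is essentially a functoriality (\textquoteleft five-lemma style\textquoteright) observation, and the genuine content has already been absorbed into Lemma~\ref{unique open}; consequently I do not expect any serious obstacle. The only point meriting a moment's care is the invariance of \emph{maximal abelian} under $\varphi$, which is the step that actually activates the uniqueness in Lemma~\ref{unique open}; once the order-isomorphism of subgroup posets is noted, the equality $\varphi(\widehat{M}_1)=\widehat{M}_2$ is forced and the construction of $\phi$ and $\psi$, together with the commutativity of both squares, is automatic.
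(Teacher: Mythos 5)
Your proof is correct and takes essentially the same approach as the paper: both arguments deduce $\varphi(\widehat{M}_1)=\widehat{M}_2$ from the uniqueness assertion of Lemma~\ref{unique open}, define $\phi$ as the restriction of $\varphi$ to $\widehat{M}_1$, and obtain $\psi$ as the induced map on quotients. You merely make explicit the (routine) verification that openness, normality, torsion-freeness and maximality among abelian subgroups are transported by $\varphi$, a step the paper leaves implicit.
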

		\begin{proof}
			By Lemma \ref{unique open}, $\varphi(\widehat{M}_1)=\widehat{M}_2$. We define $\phi$ to be the restriction  of $\varphi$ to $\widehat{M}_1$. Since $\widehat{M}_1, \widehat{M}_2$ are profinite free abelian groups of the same rank,   $\phi: \widehat{M}_1\rightarrow\widehat{M}_2$ is an isomorphism. Thus $\varphi$ induces an isomorphism $\psi: G_1\rightarrow G_2$ such that the diagram (\ref{diagram 1}) commutes.
		\end{proof}
		
		Note that $\widehat M$ can be considered simply as $G$-modules (see  \cite[Proposition 5.3.6 (c)]{RZ00}).
		
		\begin{lem}\label{condition to isomorphism}
			Let $$M=\bigoplus_{i=1}^{c}(A_i,a_i)\oplus\bigoplus_{j=1}^{b}A_{j}\oplus \Z^{a},\ \ \  M'=\bigoplus_{i=1}^{c'}(B_i,b_i)\oplus\bigoplus_{j=1}^{b'}B_{j}\oplus\Z^{a'}$$ be $\Z G$-lattices. Then  $\widehat{M}\cong \widehat{M'}$ as $G$-modules if and only if $a=a', b=b'$ and $c=c'$.
		\end{lem}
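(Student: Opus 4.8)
The plan is to reduce the assertion about profinite completions to one about pro-$p$ completions, where the classification of $\mathbf{Z}_pG$-lattices applies verbatim. First I would record that, since $M\cong\Z^n$, its profinite completion splits as $\widehat M=M\otimes_\Z\widehat\Z\cong\prod_q M_q$ over all primes $q$, where $M_q=M\otimes_\Z\mathbf{Z}_q$ is the $q$-primary part. A continuous $G$-equivariant isomorphism $\widehat M\to\widehat{M'}$ is automatically $\widehat\Z G$-linear (being additive, continuous and $G$-equivariant), so it commutes with the central idempotent projections of $\widehat\Z=\prod_q\mathbf{Z}_q$ and carries each $q$-primary component onto the corresponding one. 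Hence $\widehat M\cong\widehat{M'}$ as $G$-modules if and only if $M_q\cong M'_q$ as $\mathbf{Z}_qG$-modules for every prime $q$. By Proposition \ref{31.2 (ii)}, and because $p$ is the only prime dividing $|G|=p$, this in turn holds if and only if $M_p\cong M'_p$ as $\mathbf{Z}_pG$-modules. This reduction is the step that needs the most care, and it is precisely where the hypothesis that $|G|$ is prime is used.

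Next I would compute the pro-$p$ components. Tensoring the decomposition \eqref{sum 1} with $\mathbf{Z}_p$ and applying Remark \ref{Remark-2}, each summand $(A_i,a_i)$ becomes $\mathbf{Z}_pG$, each ideal summand $A_j$ becomes $\mathbf{Z}_p[\zeta]$, and each trivial summand $\Z$ becomes the trivial module $\mathbf{Z}_p$. Hence
\[
M_p\cong(\mathbf{Z}_pG)^{c}\oplus(\mathbf{Z}_p[\zeta])^{b}\oplus\mathbf{Z}_p^{\,a},\qquad
M'_p\cong(\mathbf{Z}_pG)^{c'}\oplus(\mathbf{Z}_p[\zeta])^{b'}\oplus\mathbf{Z}_p^{\,a'}.
\]
The crucial observation, which also explains why the genus can be nontrivial, is that pro-$p$ completion erases the ideal-class data: every nonzero ideal of $\Z[\zeta]$ completes to the single module $\mathbf{Z}_p[\zeta]$, and every module of type $(A,a)$ completes to $\mathbf{Z}_pG$, independently of the class $[A]$.

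Finally I would invoke uniqueness of decomposition. The ring $\mathbf{Z}_pG$ is a complete commutative noetherian local ring, so Proposition \ref{KSA} applies to its finitely generated modules; by Proposition \ref{Heller-Reiner} the modules $\mathbf{Z}_p$, $\mathbf{Z}_p[\zeta]$ and $\mathbf{Z}_pG$ exhaust the indecomposables and are pairwise non-isomorphic, their $\mathbf{Z}_p$-ranks being $1$, $p-1$ and $p$. Consequently the multiplicities of the three indecomposable types are invariants of $M_p$, and $M_p\cong M'_p$ if and only if these multiplicities agree, that is $a=a'$, $b=b'$ and $c=c'$. Chaining this equivalence with the reduction of the first paragraph yields the lemma. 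The argument is essentially an assembly of the cited results; its only genuine content is the collapse of the ideal-class invariant under $p$-completion noted above.
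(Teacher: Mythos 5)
Your proof is correct and follows essentially the same route as the paper's: reduce to the pro-$p$ component via Proposition \ref{31.2 (ii)}, identify the completions of the indecomposable summands using Remark \ref{Remark-2} and Proposition \ref{Heller-Reiner}, and read off the multiplicities by Krull--Schmidt--Azumaya (Proposition \ref{KSA}). The only difference is that you spell out the reduction to $q$-primary components (via $\widehat\Z$-linearity of a continuous $G$-isomorphism), which the paper leaves implicit.
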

		\begin{proof} 
			Our proof starts with the observation that
			\begin{equation*}
			\widehat{M}= \bigoplus_{i=1}^{c}\widehat{(A_i,a_i)}\oplus\bigoplus_{j=1}^{b}\widehat{A}_{j}\oplus\widehat{\Z}^{a} \  \ \text{and} \ \ \widehat M'=\bigoplus_{i=1}^{c'}\widehat{(B_i,b_i)}\oplus\bigoplus_{j=1}^{b'}\widehat B_{j}\oplus\widehat{\Z}^{a'}.
			\end{equation*}
			If $\widehat{M}\cong\widehat{M'}$ as $G$-modules, then the pro-$p$ components of $\widehat{M}$ and $\widehat{M'}$ are isomorphic as $\mathbf{Z}_pG$-modules. Therefore, the necessity follows from Remark \ref{Remark-2} together with Propositions \ref{KSA} and \ref{Heller-Reiner}.
			
			On the other hand, by Remark \ref{Remark-2} the statement holds for pro-$p$ components of $\widehat{M}$ and $\widehat{M'}$. Hence by Proposition \ref{31.2 (ii)} $M_q\cong N_q$ for all primes $q$ and so $\widehat{M}\cong\widehat{M'}$ as $G$-modules.		
		\end{proof}

		Let $\psi$ be an automorphism of a finite group $G$ and let $M$ be a $G$-module. Let $(M)^{\psi}$ denote the $G$-module $M$ given by the action $g\cdotp m:=\psi(g)\cdotp m$ for $g\in G$ and $m\in M$.  
		
		\begin{prop}\label{item (i) th 1.4}
		Let $\Gamma_1$ and $\Gamma_2$ be $n$-dimensional Bieberbach groups with  holonomy group $G$ and translation subgroups $M_1$ and $M_2$, respectively.
		Then $\Gamma_1\cong \Gamma_2$ if and only if $M_1\cong (M_2)^{\psi}$ as $G$-modules where $\psi \in  \mathrm{Aut}(G)$.
		\end{prop}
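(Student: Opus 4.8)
The plan is to prove the two implications separately: the forward one is formal and rests on the fact that the translation subgroup is characteristic, while the converse is the substantial direction and is where I expect the real work to lie.

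For the implication $\Gamma_1\cong\Gamma_2\Rightarrow M_1\cong (M_2)^{\psi}$, let $\Phi:\Gamma_1\to\Gamma_2$ be an isomorphism. By Proposition \ref{Cha, Prop 4.1} each $M_i$ is the unique maximal normal abelian subgroup of $\Gamma_i$, hence characteristic, so $\Phi(M_1)=M_2$ and $\Phi$ restricts to an abelian group isomorphism $\phi:M_1\to M_2$ inducing an automorphism $\psi:G\to G$ on the common quotient. Realizing the $G$-action on $M_i$ by conjugation with lifts, for $g\in G$, $m\in M_1$ and a lift $\tilde g$ of $g$ we get $\Phi(\tilde g\, m\,\tilde g^{-1})=\Phi(\tilde g)\,\phi(m)\,\Phi(\tilde g)^{-1}$; since $\Phi(\tilde g)$ lifts $\psi(g)$ this reads $\phi(g\cdot m)=\psi(g)\cdot\phi(m)$, which is exactly the statement that $\phi:M_1\to (M_2)^{\psi}$ is a $G$-module isomorphism.

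For the converse, a $G$-module isomorphism $M_1\cong (M_2)^{\psi}$ is the same as a semi-linear isomorphism $M_1\to M_2$ over $\psi\in\mathrm{Aut}(G)\cong\mathcal{G}(p)$. By Proposition \ref{Cha-th 6.2} this forces $M_1$ and $M_2$ to have equal invariants $a,b,c$ from \eqref{sum 1} and ideal classes lying in a single $\mathcal{G}(p)$-orbit. I would then promote $\phi$ to an isomorphism of the groups by lifting it to the extensions: the two extensions are classified by classes in $H^2(G,M_1)$ and $H^2(G,M_2)$, a semi-linear isomorphism induces an isomorphism between these cohomology groups, and it suffices to check that, after possibly adjusting $\phi$ by a self semi-linear automorphism of $M_2$, the class of $\Gamma_1$ is carried onto that of $\Gamma_2$. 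Equivalently, one may invoke Charlap's classification directly: by Propositions \ref{C1-Bieberbach} and \ref{C2-Bieberbach} the isomorphism type of $\Gamma_i$ is determined by its $4$-tuple $(a,b,c;\theta)$, so matching invariants and orbits gives $\Gamma_1\cong\Gamma_2$.

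The main obstacle is exactly this matching of extension classes, and it is here that the exceptional/non-exceptional dichotomy surfaces. Since $G$ is cyclic of prime order, periodicity of Tate cohomology gives $H^2(G,M)\cong(\Z/p)^{a}$, because $H^2(G,A_j)=0$ and $H^2(G,(A_i,a_i))=0$ (the latter as $(A_i,a_i)_p\cong\mathbf{Z}_pG$ is free, by Remark \ref{Remark-2}), and the torsion-free condition singles out the nonzero classes. In the non-exceptional case the semi-linear automorphisms of $M_2$ move these classes around transitively enough that the matching always succeeds, so $M_1\cong (M_2)^{\psi}$ already yields $\Gamma_1\cong\Gamma_2$, in agreement with the parametrization by $\mathcal{G}(p)\backslash H(\Q(\zeta))$. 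In the exceptional case $a=1$, $c=0$ the only twists that lift to a group isomorphism are those coming from the order-two subgroup of $\mathcal{G}(p)$ (complex conjugation), as recorded in Remark \ref{charlap remark} and Proposition \ref{C2-Bieberbach}; disentangling this restriction is the delicate point and is precisely the phenomenon that forces the case distinction in Theorem \ref{main Theorem 1}(i).
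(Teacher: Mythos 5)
Your argument follows the paper's proof essentially verbatim: the forward direction uses Proposition \ref{Cha, Prop 4.1} to get $\varphi(M_1)=M_2$ and reads off semi-linearity from conjugation by lifts, and the converse translates $M_1\cong (M_2)^{\psi}$ into a semi-linear isomorphism and appeals to Propositions \ref{Cha-th 6.2}, \ref{C1-Bieberbach} and \ref{C2-Bieberbach}. Your extra commentary on $H^2(G,M)$ and the exceptional case is not in the paper's proof, but it correctly pinpoints the one delicate spot that the paper's two-line appeal to the classification also leaves unresolved: in the exceptional case Proposition \ref{C2-Bieberbach} classifies the groups by $C_2\backslash H(\Q(\zeta))$ while a semi-linear isomorphism only controls the $\mathcal{G}(p)$-orbit, which is exactly the phenomenon that forces the case distinction in Theorem \ref{main Theorem 1}(i).
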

		\begin{proof}
			  Suppose  $\varphi: \Gamma_1\rightarrow\Gamma_2$ is an isomorphism. By Proposition \ref{Cha, Prop 4.1}, $M_1$ and $M_2$ are the unique normal, maximal abelian subgroups of $\Gamma_1$ and $\Gamma_2$, respectively. Hence, $\varphi(M_1)=M_2$. Therefore, $\varphi$  induces an isomorphism $\psi:\Gamma_1/M_1\rightarrow\Gamma_2/M_2$, i.e., $\psi \in \mathrm{Aut}(G)$ because $\Gamma_1/M_1 \cong G\cong \Gamma_2/M_2$. Define $\phi:= \varphi|_{M_1}$. By definition of $\phi$ and $\psi$ the following diagram commutes
				\begin{equation*}
				\begin{CD}
				1 @>{}>> M_1 @>{}>> \Gamma_1 @>{\pi_1}>> G @>>> 1 \\
				@. @VV{\phi}V @VV{\varphi}V @VV{\psi}V\\
				1 @>{}>> M_2 @>{}>> \Gamma_{2} @>{\pi_2}>> G @>>> 1.
				\end{CD}
				\end{equation*}
				Thus,
				\begin{equation*}
				\phi(x\cdotp m)=\varphi(\alpha m \alpha^{-1})=\varphi(\alpha)\varphi(m)\varphi(\alpha)^{-1}=\psi(x)\cdotp \phi(m)
				\end{equation*} 
				for $x\in G, m\in M_1$ and $\alpha \in \Gamma_1$ with $\pi_1(\alpha)=x$. Therefore, $M_1\cong (M_2)^{\psi}$ as $G$-modules.  
				
				Conversely, suppose there is $\psi\in \mathrm{Aut}(G)$ such that $M_1\cong (M_2)^{\psi}$ as $G$-modules. Then $M_1$ is semi-linearly isomorphic to $M_2$. Therefore, by Propositions \ref{Cha-th 6.2}, \ref{C1-Bieberbach} and \ref{C2-Bieberbach} we have $\Gamma_1\cong\Gamma_2$. 		
		\end{proof}
			
		\begin{proof}[Proof of Theorem \ref{main Theorem 1}] 
			
			\begin{enumerate}
				\item[(i)] It follows from Proposition \ref{item (i) th 1.4} and  by Remark \ref{charlap remark}.
				
				\item[(ii)] \textquoteleft Only if\textquoteright.
				From Lemma \ref{diagram} we see that there are isomorphisms $\phi:\widehat{M}_1\rightarrow \widehat{M}_2$ and $\psi: G\rightarrow G$ such that the following diagram is commutative
				\begin{equation*}
				\begin{CD}
				1 @>{}>> \widehat{M}_1 @>{}>> \widehat{\Gamma}_1 @>{}>> G @>>> 1 \\
				@. @VV{\phi}V @VV{\varphi}V @VV{\psi}V\\
				1 @>{}>> \widehat{M}_2 @>{}>> \widehat{\Gamma}_{2} @>{}>> G @>>> 1.
				\end{CD}
				\end{equation*}
				Then $\phi:\widehat{M}_1\rightarrow \widehat{M}_2$ is a $G$-module homomorphism which is an isomorphism by Lemma \ref{diagram}. 
				
				Conversely, by Lemma \ref{characterization-Bieberbach group with n(G)=1}, $M=M_{n-1}\oplus \Z$, $M'=M'_{n-1}\oplus \Z'$ such that 
				$\Gamma_1=M_{n-1}\rtimes C_1$ and  $\Gamma_2={M'}_{n-1}\rtimes C_2$, where $C_1,C_2$ contain $\Z$ and $\Z'$ as subgroups of index $p$ and act on $M_{n-1}, {M'}_{n-1}$ as $G$.
				Hence, $\widehat M=\widehat M_{n-1}\oplus \widehat\Z$, $\widehat M'=\widehat M'_{n-1}\oplus \widehat \Z'$ and 
				$\widehat \Gamma_1=\widehat M_{n-1}\rtimes \widehat C_1$ and  $\widehat \Gamma_2=\widehat M'_{n-1}\rtimes \widehat C_2$, where $\widehat C_1,\widehat C_2$ act on $\widehat M_{n-1}, \widehat M'_{n-1}$ as $G$. 	
				By Lemma \ref{condition to isomorphism} and Proposition \ref{KSA}, we get that the profinite completions of $G$-modules $\widehat M_{n-1}$, $\widehat M'_{n-1}$ are isomorphic (since they are isomorphic as $\mathbf{Z}_pG$-modules). Hence, $\widehat \Gamma_1$ is isomorphic to $\widehat \Gamma_2$. 
			\end{enumerate}
		\end{proof}
		
		The profinite version of Propositions \ref{C1-Bieberbach} and \ref{C2-Bieberbach} is the following.

		\begin{cor}\label{cor-3}
			Let  $\Gamma$ be a $n$-dimensional Bieberbach group whose holonomy group has prime order. Then there is a one-to-one correspondence between isomorphism classes of $\widehat{\Gamma}$ and triples $(a,b,c)$ from the decomposition \eqref{sum 1} with $a>0$. 
		\end{cor}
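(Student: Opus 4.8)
The plan is to establish the bijection by combining the classification of profinite completions with the counting of invariants. First I would recall that by Theorem \ref{main Theorem 1}(ii), we have $\widehat{\Gamma}_1\cong\widehat{\Gamma}_2$ if and only if $\widehat{M}_1\cong\widehat{M}_2$ as $G$-modules, where $M_1,M_2$ are the corresponding translation subgroups. This reduces the problem of classifying isomorphism classes of $\widehat{\Gamma}$ to classifying isomorphism classes of the profinite completions $\widehat{M}$ of the associated $\Z G$-lattices.

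Next I would invoke Lemma \ref{condition to isomorphism}, which gives the precise criterion for when two such completions are isomorphic: writing $M$ in its canonical decomposition \eqref{sum 1} with invariants $a,b,c$ (and similarly for $M'$), we have $\widehat{M}\cong\widehat{M'}$ as $G$-modules if and only if $a=a'$, $b=b'$ and $c=c'$. Crucially, the ideal class $[A]$ plays no role whatsoever in the profinite completion, precisely because at the relevant prime $p$ every nonzero ideal $A$ completes to $\mathbf{Z}_p[\zeta]$ and every module $(A,a_0)$ completes to $\mathbf{Z}_pG$ (Remark \ref{Remark-2}). Thus the isomorphism class of $\widehat{\Gamma}$ is completely determined by, and conversely determines, the triple $(a,b,c)$.

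It remains to check that the triples $(a,b,c)$ that actually arise from Bieberbach groups are exactly those with $a>0$. For this I would combine the two Charlap classifications: Proposition \ref{C1-Bieberbach} handles the non-exceptional case (requiring $a>0$, $b\ge 0$, $c\ge 0$ with the constraints $(a,c)\neq(1,0)$ and $(b,c)\neq(0,0)$), while Proposition \ref{C2-Bieberbach} handles the exceptional case, which in the notation of Definition \ref{exceptional} corresponds exactly to $a=1$, $c=0$, $b>0$. Taking the union of these two families, one sees that every triple $(a,b,c)$ with $a>0$ (and $(b,c)\neq(0,0)$, needed for faithfulness of the holonomy action) is realized, since the exceptional case supplies precisely the triples $(1,b,0)$ with $b>0$ that the non-exceptional classification excludes. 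Conversely every Bieberbach group falls into one of these two cases, so the condition $a>0$ characterizes the admissible triples.

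The main obstacle, and the point requiring care, is the bookkeeping at the boundary between the exceptional and non-exceptional families: one must verify that the union of the two parameter sets, after forgetting the ideal-class datum $\theta$, yields each triple $(a,b,c)$ with $a>0$ exactly once rather than with multiplicity or with a gap. In particular the exceptional triples $(1,b,0)$ are exactly those omitted by the constraint $(a,c)\neq(1,0)$ in the non-exceptional case, so the two families partition cleanly; establishing this disjointness and completeness, together with the observation that distinct $\theta$ collapse to the same $\widehat{\Gamma}$, is what makes the correspondence a genuine bijection onto $\{(a,b,c): a>0\}$.
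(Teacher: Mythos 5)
Your proposal is correct and follows exactly the route the paper intends: the paper offers no written proof, presenting the corollary as the ``profinite version'' of Propositions \ref{C1-Bieberbach} and \ref{C2-Bieberbach}, and the implicit argument is precisely your combination of Theorem \ref{main Theorem 1}(ii), Lemma \ref{condition to isomorphism} (which kills the ideal-class datum $\theta$), and the two Charlap classifications to identify the admissible triples. Your added observation that the triples must also satisfy $(b,c)\neq(0,0)$ (faithfulness of the holonomy action) is a legitimate clarification of the statement rather than a deviation from the proof.
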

		
		Now we are ready to prove Theorem \ref{main Theorem}.

		\begin{proof}[Proof of Theorem \ref{main Theorem}]
			Let $\Gamma_1$ and $\Gamma_2$ be $n$-dimensional Bieberbach groups with holonomy group of prime order and translation subgroups $M_1$ and $M_2$, respectively. By Lemma \ref{diagram} we can assume that $\Gamma_1$ and $\Gamma_2$ have the same holonomy group $G$ of prime order $p$. As $M_1$ and $M_2$ are $\Z G$-modules, by Proposition \ref{Diederichsen-Reiner},
			\begin{equation*}
			M\cong \bigoplus_{i=1}^{c}(A_i,a_i)\oplus\bigoplus_{j=1}^{b}A_{j}\oplus\Z^{a}
			\end{equation*}
			and 
			\begin{equation*}
			M'\cong \bigoplus_{i=1}^{c'}(B_i,b_i)\oplus\bigoplus_{j=1}^{b'}B_{j}\oplus \Z^{a'}
			\end{equation*}
			where  $A_i, \  B_i$ are ideals of $\Z[\zeta]$. Since $\widehat{\Gamma}_1\cong\widehat{\Gamma}_2$, we have $\widehat{M}_1\cong\widehat{M}_2$ as $G$-modules (see Theorem \ref{main Theorem 1}). Hence, $a=a', \ b=b'$ and $c=c'$ by Lemma \ref{condition to isomorphism}. Therefore, the statement follows from Propositions \ref{C1-Bieberbach} and \ref{C2-Bieberbach} together with Theorem \ref{main Theorem 1}.
		\end{proof}
		
		\begin{cor}\label{example 1} 
			Let $\Gamma$ be a  Bieberbach group whose holonomy group has prime order. If $H(\Q(\zeta))$ is non-trivial, then   $|\mathfrak{g}(\Gamma)|>1$.
		\end{cor}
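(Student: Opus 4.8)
The plan is to deduce the statement directly from Theorem \ref{main Theorem} by reducing it to a counting question about orbits of the ideal class group. By Theorem \ref{main Theorem}, the cardinality $|\mathfrak{g}(\Gamma)|$ equals $|C_2\backslash H(\Q(\zeta))|$ in case (i) and $|\mathcal{G}(p)\backslash H(\Q(\zeta))|$ in case (ii); these are precisely the two alternatives that can occur for a Bieberbach group with prime order holonomy. In either case the genus is computed as the number of orbits of $H(\Q(\zeta))$ under a group acting by automorphisms—namely $C_2$ acting by inversion, or $\mathcal{G}(p)$ acting on ideal classes via Lemma \ref{Exercise 6.2}. Hence it suffices to show that each of these two actions admits at least two orbits whenever $H(\Q(\zeta))$ is non-trivial.

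The key observation I would use is that the identity element of $H(\Q(\zeta))$, i.e.\ the class of principal ideals, is a fixed point of both actions and therefore forms a singleton orbit. For inversion this is immediate, since the identity of any abelian group is its own inverse. For the Galois action it follows because any $\sigma\in\mathcal{G}(p)$ carries a principal ideal $(\alpha)$ of $\Z[\zeta]$ to the principal ideal $(\sigma(\alpha))$, so $\sigma$ fixes the trivial ideal class. With this in hand the argument concludes uniformly: if $H(\Q(\zeta))$ is non-trivial then it contains a class $[A]\neq[e]$, and $[A]$ necessarily lies in an orbit distinct from the singleton orbit $\{[e]\}$. Thus there are at least two orbits, and consequently $|\mathfrak{g}(\Gamma)|\geq 2>1$ in both case (i) and case (ii), which is exactly the assertion.

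I do not expect a genuine obstacle here, as the corollary is an essentially formal consequence of Theorem \ref{main Theorem}. The only point meriting an explicit line of justification is that the Galois action preserves the class of principal ideals; this reduces to the elementary fact that a ring automorphism of $\Z[\zeta]$ sends principal ideals to principal ideals, after which the orbit-counting step is immediate. I would present the two cases together, stressing that in both settings the trivial class is an isolated fixed point, so that non-triviality of $H(\Q(\zeta))$ forces more than one orbit.
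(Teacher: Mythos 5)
Your argument is correct and is exactly the (implicit) reasoning of the paper, which states this corollary without proof as an immediate consequence of Theorem \ref{main Theorem}: in both cases the genus is an orbit count for a group acting by automorphisms on $H(\Q(\zeta))$, the trivial class is a fixed point and hence a singleton orbit, so a non-trivial class forces a second orbit. Your one substantive verification --- that the Galois action fixes the principal class --- is the right point to make explicit, and your justification of it is fine.
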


\begin{remark} It follows  that the cardinality $|\mathfrak{g}(\Gamma)|$ of the genus of a Bieberbach group $\Gamma$ with the translation group $M$ and holonomy group $G$ of order $p$ equals to the cardinality of the genus of the $G$-module $M$ for non-exceptional $\Gamma$ and half of it if $\Gamma$ is exceptional.

\end{remark}

		\begin{proof}[Proof of Corollary \ref{cor-1}]
			This is a consequence of Theorem \ref{main Theorem} together with Proposition \ref{h_p=1 iff p<19}.
		\end{proof}

		\begin{proof}[Proof of Corollary \ref{cor-2}]
			From Proposition \ref{crystallographic restriction}, we deduce that for all $n\leq 21$ no prime number $p\geq 23$ can occur as orders of elements in the holonomy groups of $n$-dimensional Bieberbach groups. Then the result  follows from Corollary \ref{cor-1}.
		\end{proof}
		
		\begin{remark} If $\Gamma=M\rtimes G$, with $M$ free abelian of rank $n$ and $G$ of order $p$. Then it is not a Biebierbach group. Nevertheless using that $\Gamma$ and $M_1\rtimes G$ are isomorphic if and only if $M\cong M_1$ as $G$-modules and that the same holds for the profinite completions one can deduce that the genus of $\Gamma$ has the same cardinality  as in Theorem \ref{main Theorem}. Moreover, considering $G$ as a subgroup of $\mathrm{Aut}(M)=\mathrm{GL}(n, \Z)$ one deduces from \cite[Proposition 2.17]{GZ11} that the cardinality of the genus is exactly the number of conjugacy classes of subgroups of order $p$ of $\mathrm{GL}(n, \Z)$ in the conjugacy class of $G$ in $\mathrm{GL}(n,\widehat\Z)$. 	
		\end{remark} 
		
		\section*{Acknowledgements}
		The author wishes to express his thanks to Prof. Dr. Pavel Zalesskii for many discussions and advices.

\bibliographystyle{amsplain}

\end{document}